\title{\vspace{-1cm}Global well-posedness of strong solutions of Doi model with large viscous stress}
\date{\today}
\documentclass[12pt]{article}
\usepackage{authblk}
\usepackage{blindtext}
\usepackage{amsfonts}
\usepackage{amsmath}
\usepackage{amsthm}
\usepackage{graphicx}
\usepackage[margin=2cm]{geometry}
\newtheorem{theorem}{Theorem}

\newtheorem{proposition}{Proposition}
\newtheorem{remark}{Remark}

\newtheorem{definition}{Definition}
\newcommand{\norm}[1]{\left\lVert#1\right\rVert}
\author[1]{	Joonhyun La \thanks{joonhyun@math.princeton.edu}}
\affil[1] {Department of mathematics, Princeton University}
\begin{document}
\maketitle
\bibliographystyle{abbrv}
\abstract{We study models of dilute rigid rod-like polymer solutions. We establish the global well-posedness the Doi model for large data, and for arbitrarily large viscous stress parameter. The main ingredient in the proof is the fact that the viscous stress adds dissipation to high derivatives of velocity.}

\section{Introduction}

Study of rod-like polymer suspensions has various applications. In particular liquid crystals are successfully modeled as rigid rod. We are interested in a dilute suspension of rigid rod-like polymers, in dimension 2. In particular, we investigate the Doi model:
\begin{equation}
\begin{gathered}
\partial_t u + u \cdot \nabla_x u = - \nabla_x p + \Delta_x u + \nabla_x \cdot \sigma, \\
\nabla_x \cdot u =0, \\
\partial_t f + u \cdot \nabla_x f = k \Delta_m f + \nu \Delta_x f - \nabla_m \cdot \left ( P_{m^\perp} \left ( (\nabla_x u ) m f \right ) \right ), \\
\sigma = 2 \int_{\mathbb{S}^1} ( m \otimes m  - \frac{1}{2}\mathbb{I}_2 ) f dm + \eta \int_{\mathbb{S}^1}  \left ( (\nabla_x u ) : m \otimes m \right ) m \otimes m f dm, \\
(x,m, t) \in \mathbb{T}^2 \times \mathbb{S}^1 \times (0, T), \\
u(x, 0) = u_0 (x), f(x, m, 0 ) = f_0 (x, m),
\end{gathered} \tag{Doi} \label{Doi}
\end{equation}
where $u$ is the velocity field of the fluid, $p$ is the pressure, $\sigma$ is the added stress field due to the presence of polymer, $f = f(x, t, m)$ is the polymer distribution, and $u_0, f_0$ are initial data.  Also constant parameters $k, \nu >0 $ represents configurational and spatial diffusivity of polymers, respectively, and $\eta > 0$ is a constant parameter representing the concentration of the polymers. We prove global well-posedness of strong solution of (\ref{Doi}). The term
$$ P_{m^\perp} ( g \vec{v} ) = (m^\perp \cdot \vec{v} ) g m^\perp $$
is the projection to the tangent space of $\mathbb{S}^1$ at $m$, and $\nabla_m = \partial_\theta $ in local coordinates. The polymer stress tensor $\sigma$ can be decomposed into two terms: $\sigma = \sigma_E + \sigma_V$, where
\begin{equation}
\sigma_E (f) = 2 \int_{\mathbb{S}^1} (m \otimes m - \frac{1}{2} \mathbb{I}_2 ) f dm, \tag{Elastic} \label{Elastic} 
\end{equation}
and
\begin{equation}
\sigma_V (f) = \eta \int_{\mathbb{S}^1} ((\nabla_x u) : m \otimes m ) m \otimes m f dm. \tag{Viscous} \label{Viscous}
\end{equation}
The presence of viscous stress tensor is the main difficulty for the well-posedness of the Doi model. Viscous stress tensors arise from rigidity constraint of the polymer( \cite{Doi1986}), and mathematically $\sigma_V (f)$ is not elliptic in $u$, which makes the momentum equation of (\ref{Doi}) non-parabolic for large $\eta$. This difficulty can be clearly illustrated in the approximate Doi model:
\begin{equation}
\begin{gathered}
\partial_t u + u \cdot \nabla_x u = - \nabla_x p + \Delta_x u + \nabla \cdot \sigma, \\
\nabla_x \cdot u = 0, \\
\sigma = \eta (\nabla_x u : A ) A, \\
\partial_t A + u \cdot \nabla_x A  = (\nabla_x u ) A + A (\nabla_x u) ^T - 2 (\nabla_x u : A) A - 2k (2A - \mathbb{I}_2 ) + \nu \Delta_x A, \\
u(x, 0) = u_0 (x), A(x, 0) = A_0 (x), \\
(x, t) \in \mathbb{T}^2 \times (0, T).
\end{gathered} \tag{DA} \label{DA}
\end{equation}
The model (\ref{DA}) is an approximate closure of Doi model (\ref{Doi}) obtained by letting $A = \int_{S^1} n \otimes n f dn$ and adopting the decoupling approximation $\sigma \simeq \eta (\nabla_x u : A) A$ and ignoring elastic stress part. We establish the energy estimate:
\begin{equation}
\frac{1}{2} \frac{d}{dt} \norm{u}_{L^2} ^2 + \norm{\nabla_x u}_{L^2} ^2 + \eta \int | ( \nabla_ x u) : A |^2 dx = 0,
\end{equation}
and we see that in fact viscous stress is another dissipative structure for $u$. Based on this remarkable property, which holds in (\ref{Doi}) also, Lions and Masmoudi proved global existence of weak solution of (\ref{Doi}) in \cite{MR2340887}. However, when we apply the vorticity estimate, at first point we can only obtain
\begin{equation}
\frac{d}{dt} \norm{\omega}_{L^2} ^2 + \norm{\nabla_x \omega}_{L^2} ^2 \le \norm{\nabla_x \cdot \sigma}_{L^2} ^2 \le \eta ^2 \left (\norm{\nabla \nabla u }_{L^2} ^2 + (\mathrm{Error}) \right )
\end{equation}
which makes the right hand side for the second inequality intractable if $\eta > \eta_c$ for some threshold $\eta_c$. 
\newline \newline
Recently, in \cite{musacchio2018enhancement}, the authors numerically discovered that, when $\eta$ exceeds some threshold $\eta_c$, the flow governed by (\ref{DA}) becomes chaotic. It was hence unclear this phenomenon supports the claim that the systems (\ref{Doi}) and (\ref{DA}) lack structure to control higher regularity of $u$. However, in this work we find that actually the viscous stress tensor adds dissipation for higher derivatives of $u$ also, modulo derivatives in polymer variables ( (\ref{C-Doi2D}), (\ref{C2-Doi2D})). This observation is crucial in proving global well-posedness of (\ref{Doi}) and (\ref{DA}) in diffusive systems $\nu > 0$.

\paragraph{Notion of the solution.}
For the notion of solution, we follow the argument in \cite{jl1}. By focusing on the evolution of macroscopic variables (trigonometric moments in this case), we can set up well-posedness of strong solutions for large class of initial data. In particular, higher regularity of Fokker-Planck equation is not necessary, and weak solution for Fokker-Planck equation is sufficient. On the other hand, since the effect of polymer to the flow are characterized by stresses, which are moments in (\ref{Doi}), requiring spatial regularity for appropriate moments is necessary. In this regard, we introduce a terminology: for any $n \in \mathbb{Z}_{>0}$, we let
\begin{equation}
M_n (x, t) := \left ( M_n ^I (x, t) \right )_{I : |I| = n } := \left ( \int_{\mathbb{S}^1} m^I f (x, t, m) dm \right )_{I : |I| = n} \tag{Moment} \label{Moment}
\end{equation}
be the vector of all moments of $f$ of order $n$. Also, we define the weak solution as following (\cite{MR3443169}):
\begin{definition}
Given a divergence-free vector field $v \in L^\infty (0, T; W^{2,2} ) \cap L^2 (0, T; W^{3,2} )$, $\mu$ is a weak solution to the Cauchy problem
$$ \partial_t \mu + v \cdot \nabla_x \mu = k \Delta_m \mu + \nu \Delta_x \mu - \nabla_m \cdot (P_{m^\perp}  ( ( \nabla_x v) m \mu ) ) , \mu (t=0) = \nu $$
if for almost every $t \in (0, T)$,
\begin{equation}
\begin{gathered}
\int_{\mathbb{T}^2 \times \mathbb{S}^1 } \phi_x (x) \phi_m (m) d\mu (x, t; dm) dx - \int_{\mathbb{T}^2 \times \mathbb{S}^1 } \phi_x (x) \phi_m (m) d \nu (x; dm) dx \\
= \lim_{\tau \rightarrow 0} \int_{\tau} ^t \int_{\mathbb{T}^2 \times \mathbb{S}^1 } \left [ v \cdot \nabla_x \phi_x \phi_m + k \Delta_m \phi_m \phi_x + \nu \Delta_x \phi_x \phi_m + \phi_x \nabla_m \phi \cdot P_{m^\perp} ( (\nabla_x v ) m ) \right ] \mu(x; s; dm) dxds
\end{gathered} \tag{Cauchy-FP} \label{Cauchy-FP}
\end{equation}
for every $\phi_x \in C^\infty (\mathbb{T}^2 )$, $\phi_m \in C^\infty (\mathbb{S}^1)$. 
\end{definition}
Our main result is the following:
\begin{theorem} \label{Main}
Suppose that $u_0 \in \mathbb{P}W^{2,2} (\mathbb{T}^2)$, $f_0 \ge 0 \in L^1(\mathbb{T}^2 \times \mathbb{S}^1 )$ with $\sigma_E(f_0 ) \in W^{1,2} (\mathbb{T}^2 ), \int_{\mathbb{T}^2} \int_{\mathbb{S}^1} (f_0 \log f_0 -f_0 +1 ) dm dx < \infty$, $M_0 \in L^\infty (\mathbb{T}^2 )$, $M_4 ( f_0) \in W^{2,2} (\mathbb{T}^2)$, and $M_6 (f_0) \in W^{1,2} (\mathbb{T}^2 ) $. Then there is a unique solution $(u, f)$ to (\ref{Doi}), where $u \in L^\infty (0, T; W^{2,2}) \cap L^2 (0, T; W^{3,2})$ is the strong solution of the evolution equation of $u$ for (\ref{Doi}), $\sigma_E (f), M_6 (t) \in L^\infty (0, T; W^{1,2} ) \cap L^2 (0, T; W^{2,2})$, and $M_4(f) \in L^\infty (0, T; W^{2,2} ) \cap L^2 (0, T: W^{3,2} ) $. Also $f$ is given by a density $f(x, t, m)$, and $f$ is a weak solution to the Cauchy problem of the Fokker-Planck equation of (\ref{Doi}). Furthermore, the estimates (\ref{FE}), (\ref{M0B}), (\ref{MnB1}) for $n=4$, (\ref{ESB}), (\ref{V-Doi}), (\ref{M4B2}), and (\ref{V2-Doi}) hold. In addition, $f(t) \in W^{1,1} (\mathbb{T}^2 \times \mathbb{S}^1)$ holds.
\end{theorem}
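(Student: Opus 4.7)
The overall approach is to construct the solution by a regularization-approximation scheme, derive a priori estimates whose novel feature is a hidden dissipation for the high derivatives of $u$ generated by the viscous stress itself, then pass to the limit and prove uniqueness by a Gronwall argument on the difference of two solutions. I would first mollify the initial data and regularize the Fokker-Planck equation so that one has smooth approximate solutions $(u^\epsilon, f^\epsilon)$ to which all the formal manipulations below apply; the estimates that follow will be uniform in $\epsilon$.

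The a priori estimates are derived in the following order. First come the energy identity (FE) and the entropy estimate, plus the $L^\infty$ bound (M0B) on $M_0 = \int f\,dm$, which follows from the maximum principle for the Fokker-Planck equation after integrating out $m$ and using $\nu>0$. Next, the moment bounds (MnB1) for $n=4$ and $n=6$ are derived from the evolution equations of $M_n$, which are near-closed on $\mathbb{S}^1$ and couple only to first derivatives of $u$; the elastic stress bound (ESB) is a consequence of (MnB1) for $n=2$. The crucial step is the high-regularity estimates (V-Doi) and (V2-Doi) for $u$. Here I apply $\partial^\alpha$ (with $|\alpha| \le 2$) to the momentum equation, test against $\partial^\alpha u$, and extract from $\partial^\alpha \nabla_x\cdot\sigma_V$ the leading dissipative contribution
\begin{equation*}
-\eta \int_{\mathbb{T}^2}\int_{\mathbb{S}^1}\bigl((\partial^\alpha \nabla_x u):m\otimes m\bigr)^2 f\,dm\,dx \le 0.
\end{equation*}
This is precisely the \emph{dissipation for high derivatives modulo polymer derivatives} announced in the introduction (cf.\ (C-Doi2D), (C2-Doi2D)): it absorbs the dangerous $\eta^2\|\nabla\nabla u\|_{L^2}^2$ term that prevents the naive vorticity estimate from closing for large $\eta$. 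The commutator terms that remain are of lower order in $u$ but involve derivatives of $f$ through the moments $M_4$ and $M_6$, and are controlled by $\|M_4\|_{W^{2,2}}$ and $\|M_6\|_{W^{1,2}}$ from (M4B2). A simultaneous Gronwall inequality in $\|u\|_{W^{2,2}}$, $\|M_4\|_{W^{2,2}}$, and $\|M_6\|_{W^{1,2}}$ then closes the bound globally in time, for arbitrary $\eta>0$.

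The main obstacle is ensuring that the commutators between $\partial^\alpha$ and $\sigma_V$ really involve only the already-controlled macroscopic moments $M_4$ and $M_6$ and never full Sobolev norms of $f$ in $m$; the macroscopic-moment philosophy of \cite{jl1} is essential here, together with a careful accounting of which order of moment and which order of spatial derivative appears in each commutator piece, and with appropriate interpolation to absorb top-order terms into the viscous and polymer-induced dissipations. Once the a priori estimates are in hand, passing to the limit is standard: Aubin-Lions compactness gives strong convergence of $u^\epsilon$ in $L^2(0,T;W^{2,2})$, the moment equations pass to the limit in the sense of distributions, and the bounds on $M_0$ combined with entropy control identify $f$ with an $L^1$ density and yield the $W^{1,1}$ regularity. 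Uniqueness is obtained by writing the equations satisfied by the differences $u^1-u^2$ and $f^1-f^2$ and running a Gronwall argument in $L^2 \times L^2$, using the $W^{2,2}$ bound on $u$ already established and the uniform moment bounds to control the bilinear terms; no additional new idea is needed here.
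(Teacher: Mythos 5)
Your a priori estimates follow the paper's strategy: the hidden dissipation you extract, $-\eta\int\int((\partial^\alpha\nabla_x u):m\otimes m)^2 f\,dm\,dx$, is exactly the cancellation the paper records in (\ref{C-Doi2D}) and (\ref{C2-Doi2D}) (phrased there through the vorticity rather than through $\partial^\alpha u$), and your ordering of the bounds (FE), (M0B), (MnB1), (ESB), (V-Doi), (M4B2), (V2-Doi) matches the paper's. Two secondary points are underspecified but repairable: the construction of approximate solutions cannot go through a naive contraction, since $\nabla_x\cdot\sigma_V$ is a non-small perturbation of order $\nabla\nabla u$ for large $\eta$ (the paper uses spectral projections $\mathcal{J}_\ell$ and an implicit iteration in which $u_{n+1}$ enters the viscous stress linearly); and the identification of the limit $f$ is done in the paper via the Carath\'eodory--Toeplitz trigonometric moment problem plus Prokhorov, because only moments of $f$ are controlled, though your mollification-plus-entropy route can be made to work.

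The genuine gap is in the uniqueness argument, which you dismiss as a routine Gronwall in $L^2\times L^2$ ``with no additional new idea.'' This fails for two reasons. First, $f-g$ admits no $L^2(\mathbb{T}^2\times\mathbb{S}^1)$ estimate under the stated hypotheses: $f$ is only a weak (a priori measure-valued) solution with controlled moments, so there is no $L^2_m$ regularity to Gronwall. Second, the moment hierarchy does not close: the equation for $M_n(f)-M_n(g)$ contains $T_{2,n}(\nabla_x u, M_{n+2}(f))-T_{2,n}(\nabla_x v, M_{n+2}(g))$, so estimating $M_4(f)-M_4(g)$ requires $M_6(f)-M_6(g)$, which requires $M_8$, and so on indefinitely; ``uniform moment bounds'' on each solution separately do not control these differences. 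The paper's resolution is to control the quantity $\|\int_{\mathbb{S}^1}|f-g|\,dm\|_{L^2_x}$, which dominates every moment difference pointwise in $x$. This is obtained by testing the difference of the Fokker--Planck equations against a regularized sign function $\mathrm{sgn}_\beta(f-g)$, discarding the sign-definite diffusion terms, and bounding the transport commutator by $\|u-v\|_{W^{1,\infty}}\|\int|\nabla_{x,m}g|\,dm\|_{L^2}$, where the last factor is finite only because of the Fisher-information bound $\int\int|\nabla_{x,m}g|^2/g\,dm\,dx$ coming from the free energy estimate (\ref{FE}). The resulting inequality (\ref{FPdiff2}) bounds $\|\int|f-g|\,dm\|_{L^2}^2$ by $\|u-v\|_{L^2(0,t;W^{3,2})}^2$, which closes the hierarchy and leads to a Gronwall inequality of the nonstandard form $\frac{d}{dt}F+G\le\phi(t)(F+\int_0^tG)$ rather than a plain $L^2\times L^2$ one. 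Without this step your uniqueness proof does not close.
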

\paragraph{Previous works}
The system (\ref{Doi}) is an example of more general Fokker-Planck-Navier-Stokes systems. When viscous stress is ignored, global well-posedness is established by various authors including Constantin, Fefferman, Kevrekidis, Masmoudi, Seregin, Titi, Vukadinovic, and Zarnescu (\cite{MR2188682}, \cite{MR2391531}, \cite{MR2276466}, \cite{MR2073948}, \cite{MR2367203}, \cite{MR2600741}, \cite{MR2667634}, \cite{MR2164412}) even in non-diffusive regime. Also Otto and Tzavaras proved global existence of weak solution in 3D Doi model, without viscous stress, coupled with Stokes flow in \cite{MR2365451}. For Doi models (\ref{Doi}), Lions and Masmoudi proved global existence of weak solution in \cite{MR2340887} with important observation of dissipative nature of viscous stress. Also Zhang and Zhang proved local and small data global well-posedness for (\ref{Doi}) models for small $\eta$ in \cite{MR2452886}. Compressible Doi model is discussed by Bae and Trivisa in \cite{MR2974165}, \cite{MR3061144}, and \cite{MR3524331}. The relationship between rigid rod-like polymer suspension models and Ericksen-Leslie model for nematic liquid crystal has been investigated in \cite{MR3366748}. For more general introduction for complex fluids, there are excellent references including \cite{masmoudi2018equations}, \cite{MR2922368}, and \cite{MR3050292}.

\section{Global well-posedness of the strong solution of (\ref{DA})} \label{globalwellsmall}

In this section, we prove the following theorem.
\begin{theorem}
Given $(u_0, A_0 ) \in \mathbb{P}W^{2,2} (\mathbb{T}^2 ) \times W^{2,2} (\mathbb{T}^2 )$, where $A_0$ is a $2 \times 2$ positive definite matrix valued function with $\mathrm{Tr} \, A_0 \equiv 1$, then for any $T>0$ there is a unique strong solution $(u, A) \in \left (L^\infty (0, T; \mathbb{P}W^{2,2} (\mathbb{T}^2) ) \cap L^2 (0, T; \mathbb{P} W^{3,2} (\mathbb{T}^2 ) ) \right ) \times \left (L^\infty (0, T; W^{2,2} (\mathbb{T}^2) ) \cap L^2 (0, T;  W^{3,2} (\mathbb{T}^2 ) ) \right )$ satisfying (\ref{DA}) and $\mathrm{Tr} \, A \equiv 1$ and $A$ remains positive definite. Furthermore, the solution satisfies the estimates (\ref{Energy}), (\ref{A1}), (\ref{A2}), (\ref{Vorticity}), (\ref{A3}), and (\ref{GV}).
\end{theorem}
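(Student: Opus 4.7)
The plan is to prove the theorem via a priori estimates, an approximation scheme (Galerkin for $u$ coupled with linear parabolic solvability for $A$), and Aubin-Lions compactness to pass to the limit; uniqueness is then an $L^2$-difference energy estimate. The structural properties $\mathrm{Tr}\, A \equiv 1$ and pointwise positive-definiteness of $A$ are established first, since together they yield $|A| \le 1$ pointwise and simplify all subsequent nonlinear estimates. Tracing the $A$-equation, the terms $2(\nabla_x u : A) - 2(\nabla_x u : A)\,\mathrm{Tr}\, A - 4k(\mathrm{Tr}\, A - 1)$ make $\phi := \mathrm{Tr}\, A - 1$ solve a linear transport-diffusion equation with source vanishing on $\phi = 0$, so the parabolic maximum principle propagates $\phi \equiv 0$ from $t=0$; positive-definiteness follows from a standard argument on the evolution of the smallest eigenvalue of $A$ along the flow.

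Testing the momentum equation with $u$ and using $\sigma = \eta (\nabla_x u : A) A$ gives the energy identity
$$\frac{1}{2} \frac{d}{dt} \norm{u}_{L^2}^2 + \norm{\nabla_x u}_{L^2}^2 + \eta \int |(\nabla_x u) : A|^2 \, dx = 0,$$
yielding $u \in L^\infty_t L^2_x \cap L^2_t H^1_x$. Since $|A| \le 1$, the right-hand side of the $A$-equation is linear in $\nabla_x u$ with $A$-dependent coefficients bounded in $L^\infty$, so standard parabolic energy estimates in $H^s$ bootstrap the regularity of $A$ up to the claimed $L^\infty_t H^2_x \cap L^2_t H^3_x$ bounds, contingent on matching regularity for $u$.

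The main obstacle is the vorticity and higher-derivative estimate for $u$, where the paper's new input enters. The naive bound $\frac{d}{dt} \norm{\omega}_{L^2}^2 + \norm{\nabla_x \omega}_{L^2}^2 \le C \norm{\nabla_x \cdot \sigma}_{L^2}^2$ fails when $\eta$ is large because $\norm{\nabla_x \cdot \sigma}_{L^2}$ contains a term of size $\eta \norm{\nabla_x^2 u}_{L^2}$ that the left-hand side cannot absorb. Instead I would keep $\sigma = \eta (\nabla_x u : A) A$ structural and compute $\int \omega\, \mathrm{curl}(\nabla_x \cdot \sigma)\, dx$ by integrating by parts to extract a non-negative quadratic form in $\nabla_x^2 u$ weighted by $A \otimes A$. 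Concretely, I expect a principal dissipative contribution of the form $\eta \int |(\nabla_x^2 u) : A|^2 \, dx$ (in a suitable contraction) which absorbs the bad piece, leaving commutator errors carrying at most one derivative on $u$ paired with derivatives on $A$, controllable by the $H^2$ parabolic bound on $A$ together with Sobolev embedding on $\mathbb{T}^2$. This produces the vorticity estimate, and iterating once more with the upgraded $A$-regularity yields $u \in L^\infty_t H^2_x \cap L^2_t H^3_x$.

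With uniform estimates in hand, a Galerkin approximation of the momentum equation coupled to parabolic solvability for $A$ produces approximate solutions satisfying the same bounds; Aubin-Lions compactness then passes to the limit in every nonlinear term, including $\eta(\nabla_x u : A) A$, using strong convergence of $\nabla_x u$ in $L^2_t L^2_x$ and of $A$ in $L^\infty_t L^\infty_x$. Uniqueness is an $L^2$-energy estimate on the difference $(u_1 - u_2, A_1 - A_2)$, where the quadratic nonlinearity in $\nabla_x u$ on the $A$-side is tamed by the $H^2_x$ regularity of the solutions, closing by Gronwall. Everything outside the hidden-coercivity calculation in the enstrophy step is standard; that calculation is the heart of the argument.
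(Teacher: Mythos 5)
Your proposal follows the paper's strategy essentially step for step: the same hierarchy of a priori estimates (energy, then $H^1$ for $A$, then vorticity, then $H^2$ for $A$, then $H^1$ for $\omega$), the same key mechanism of extracting a hidden nonnegative quadratic form $\eta\int |(\nabla_x\nabla_x u):A|^2\,dx$ from $\int\omega\,\nabla_x^\perp\cdot(\nabla_x\cdot\sigma)\,dx$ (the paper realizes this concretely via the stream function in (\ref{Cancellation}) and (\ref{Cancellation2})), the same approximation-plus-Aubin--Lions existence argument (the paper uses spectral truncations $\mathcal{J}_{n+1}$ of the stress precisely because, as it notes, a direct contraction mapping fails for large $\|A\|_{Y}$ --- your Galerkin route is the same fix), and the same relative-energy uniqueness proof. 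Two small cautions: the error terms in the cancellation actually carry $\nabla_x\omega$ (not just one derivative of $u$) against $\nabla_x A$, so half of them must be absorbed into the viscous dissipation rather than estimated purely by the regularity of $A$; this is consistent with your statement that the dissipation absorbs the bad piece, but the bookkeeping matters for (\ref{Vorticity}) and (\ref{GV}).

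The one genuine gap is the positive-definiteness of $A$. The ``standard argument on the evolution of the smallest eigenvalue along the flow'' works for the transport/reaction part, but it does not directly accommodate the diffusion term $\nu\Delta_x A$: the smallest eigenvalue is a concave, generically non-smooth function of the matrix entries, and $\lambda_{\min}(A(x,t))$ does not satisfy a clean parabolic equation to which the minimum principle applies ($\Delta_x$ does not commute with taking eigenvalues, and the resulting Hessian cross-terms have no sign a priori). The paper's route is to derive a closed equation (\ref{Det}) for $\det A$, in which the diffusion contributes $\nu\Delta_x\det A - 2\nu\nabla_x A_{11}\cdot\nabla_x A_{22} + 2\nu|\nabla_x A_{12}|^2$; the crucial observation is that $\mathrm{Tr}\,A\equiv 1$ forces $\nabla_x A_{22}=-\nabla_x A_{11}$, so the extra terms equal $2\nu|\nabla_x A_{11}|^2 + 2\nu|\nabla_x A_{12}|^2\ge 0$ and the minimum principle applies to $\det A$ directly. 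In 2D with trace $1$, $\det A>0$ is equivalent to positive definiteness, so this closes the argument; your sketch needs this (or an equivalent viscosity-solution treatment of $\lambda_{\min}$) to be complete.
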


\subsection{A priori estimates}

First we have the propagation of positive-definiteness and Trace 1 for $A$.
\begin{proposition} \label{PD}
Suppose that $ (\nabla_x  u): A \in L^1 (0, T; L^\infty )$ and $\mathrm{Tr} \, A (0) \equiv 1$ and $A(0)$ is positive definite. Then $A(t)$ remains positive definite with $\mathrm{Tr} \,A \equiv 1$.
\end{proposition}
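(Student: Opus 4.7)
The plan is to handle the two assertions separately. Trace preservation follows from a linear parabolic equation with zero initial datum, while positive definiteness, once $\operatorname{Tr} A \equiv 1$ is known, is equivalent to $\det A > 0$ for symmetric $2\times 2$ matrices with unit trace, and this quantity is shown to satisfy a parabolic supersolution inequality with a strictly positive source.

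For trace preservation, I take the trace of the $A$-equation. Using symmetry of $A$ and $\nabla_x \cdot u = 0$, both $\operatorname{Tr}((\nabla_x u) A)$ and $\operatorname{Tr}(A (\nabla_x u)^T)$ equal $\nabla_x u : A$. Setting $w := \operatorname{Tr} A - 1$, one obtains
\begin{equation*}
\partial_t w + u \cdot \nabla_x w - \nu \Delta_x w = -\bigl(2(\nabla_x u : A) + 4k\bigr) w, \qquad w(0) \equiv 0.
\end{equation*}
Since the potential lies in $L^1(0,T;L^\infty)$ by hypothesis, an $L^2$ energy estimate (the transport term vanishes by incompressibility) followed by Gronwall's inequality forces $w \equiv 0$.

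For positive definiteness, set $\rho := \det A$. Strict positivity at $t=0$ and continuity permit the following computation on an initial interval where $\rho > 0$, and the quantitative lower bound derived below will then extend it to $[0,T]$ by continuation. On such an interval, $\operatorname{Tr} A \equiv 1$ gives $\operatorname{Tr} A^{-1} = 1/\rho$ in two dimensions, and Jacobi's formula applied along the flow yields
\begin{equation*}
\partial_t \rho + u \cdot \nabla_x \rho = \bigl[-4(\nabla_x u : A) - 8k\bigr] \rho + 2k + \nu \rho \operatorname{Tr}(A^{-1} \Delta_x A).
\end{equation*}
The key identity, obtained by differentiating $\rho$ twice spatially and applying the 2D Cayley--Hamilton relation $(\operatorname{Tr} M)^2 - \operatorname{Tr}(M^2) = 2 \det M$ to $M = A^{-1} \partial_{x_i} A$, reads
\begin{equation*}
\rho \operatorname{Tr}(A^{-1} \Delta_x A) = \Delta_x \rho - 2 \sum_i \det(\partial_{x_i} A).
\end{equation*}
Since $\operatorname{Tr} A \equiv 1$ forces $\operatorname{Tr}(\partial_{x_i} A) = 0$, and any symmetric $2\times 2$ matrix with zero trace has nonpositive determinant, the correction is nonnegative, and $\rho$ is a supersolution:
\begin{equation*}
\partial_t \rho + u \cdot \nabla_x \rho - \nu \Delta_x \rho \geq -\bigl[4(\nabla_x u : A) + 8k\bigr] \rho + 2k.
\end{equation*}

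Applying the parabolic minimum principle on the torus to the quantity $\rho_*(t) := \min_x \rho(x,t)$ (at a minimizer $\nabla_x \rho = 0$ and $\Delta_x \rho \geq 0$) yields $\frac{d}{dt} \rho_*(t) \geq -C(t) \rho_*(t) + 2k$ with $C(t) := 4\norm{(\nabla_x u : A)(t)}_{L^\infty} + 8k \in L^1(0,T)$; since $\rho_*(0) > 0$, Gronwall's inequality gives a quantitative bound $\rho_*(t) > 0$ on $[0,T]$, completing the proof. The main technical obstacle is justifying the pointwise minimum principle in the regularity class at hand, since $W^{2,2}$ in $2$D does not make $\Delta_x A$ pointwise defined; this is handled either by a standard regularization (run the pointwise argument on mollified data and pass to the limit using stability of the supersolution inequality under weak convergence) or, equivalently, by a Stampacchia-type $L^2$ estimate on $(\delta - \rho)_+$ for $\delta \in (0, \rho_*(0))$, where the positive source $2k$ prevents the set $\{\rho < \delta\}$ from escaping the initially empty set.
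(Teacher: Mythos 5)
Your proof is correct and follows essentially the same route as the paper: the trace satisfies a linear parabolic equation forcing $\mathrm{Tr}\,A \equiv 1$, and the determinant satisfies a parabolic supersolution inequality with source $2k$ once the gradient correction $-2\nu\sum_i \det(\partial_{x_i}A) = 2\nu\left(|\nabla_x A_{11}|^2 + |\nabla_x A_{12}|^2\right) \ge 0$ is recognized (the paper obtains the identical sign via the componentwise identity $\nabla_x A_{22} = -\nabla_x A_{11}$ in its equation (Det)). The only differences are presentational: you use an $L^2$/Gr\"onwall argument where the paper invokes the maximum principle for the trace, you reach the determinant equation via Jacobi's formula and Cayley--Hamilton rather than direct computation, and you spell out the continuation and regularization details that the paper compresses into "by the maximum principle we are done."
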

\begin{proof}
We need to check $\mathrm{det} \, A > 0$ and $\mathrm{Tr} \, A \equiv 1$. For $\mathrm{Tr} \, A \equiv 1 $, we take the trace of the third equation of (\ref{DA}) to get
\begin{equation}
(\partial_t + u \cdot \nabla_x  ) \mathrm{Tr} \, A = 2 \left ( (\nabla_x u : A ) + 2k \right ) (1 - \mathrm{Tr} \, A ) + \nu \Delta_x \mathrm{Tr} \, A 
\end{equation}
and by the maximum principle we are done. For $\mathrm{det} \, A >0$, we have
\begin{equation}
\begin{gathered}
(\partial_t + u \cdot \nabla_x ) \mathrm{det} \, A \\
= -4 ( ( (\nabla_x u ): A) + 2k ) \mathrm{det} \, A + 2k \mathrm{Tr} \, A + \nu \Delta_x \mathrm{det} \, A - 2 \nu \nabla_x A_{11} \cdot \nabla_x A_{22} + 2 \nu | \nabla_x A_{12} |^2 \\
= -4 ( (( \nabla_x u ):A ) + 2k ) \mathrm{det} \, A + \nu \Delta_x \mathrm{det} \, A + (2k + 2 \nu |\nabla_x A_{11} |^2 + 2 \nu |\nabla_x A_{12} | ^2 )
\end{gathered} \tag{Det} \label{Det}
\end{equation}
where we used $\mathrm{Tr} \, A \equiv 1$. Then by the maximum principle we are done again.
\end{proof}
We investigate a priori estimates. First, usual energy estimates give us
\begin{equation}
\frac{1}{2} \frac{d}{dt} \norm{u}_{L^2} ^2 + \norm{\nabla_x u}_{L^2} ^2 = -  \int \nabla_x u : \sigma = - \eta \int | (\nabla_x u) : A |^2 dx, 
\end{equation}
that is,
\begin{equation}
\frac{1}{2} \norm{u}_{L^\infty (0, T; L^2 ) } ^2 + \norm{\nabla_x u}_{L^2 (0,T; L^2 ) } ^2 + \eta \norm{(\nabla_x u):A }_{L^2 (0, T; L^2 ) } ^2 \le \frac{1}{2} \norm{u_0 }_{L^2} ^2. \tag{Energy}\label{Energy}
\end{equation}
For $\norm{A}_{L^\infty (0, T; L^1 \cap L^\infty ) }$, we know that from $\mathrm{Tr }\, A \equiv 1$ and $A$ is positive definite, $\norm{A(t) }_{L^\infty } \le 1 $ for all $t$. Also $0 < \mathrm{det} \, A(x, t)  \le \frac{1}{4}$ is obtained. On the other hand, using (\ref{Det}), we can obtain an estimate for $\norm{\nabla_x A}_{L^2 (0, T; L^2) }$. Integrating (\ref{Det}) with respect to $x$, it can be written as
\begin{equation}
\frac{d}{dt} \int \mathrm{det} \, A dx + 4 \int ((\nabla_x u) : A ) dx + \int 8k \mathrm{det} \, A dx = \nu \norm{\nabla_x A}_{L^2} ^2 + 2k | \mathbb{T}^2 |.
\end{equation}
However, using that $\mathrm{det} \, A \le \frac{1}{4}$, that $|A|^2 = \sum_{ij} A_{ij} ^2 = 1 - 2 \mathrm{det} \, A$ by $\mathrm{Tr} \, A \equiv 1$, and Cauchy-Schwarz inequality we have
\begin{equation}
\frac{1}{2} \frac{d}{dt} \norm{A}_{L^2} ^2 + \nu \norm{\nabla_x A (t) }_{L^2} ^2 \le 4 \norm{( \nabla_x u) : A  (t) }_{L^2} .
\end{equation}
Integrating over time, we have
\begin{equation}
\frac{1}{2} \norm{A}_{L^\infty (0, T; L^2 ) } ^2 + \nu \norm{\nabla_x A}_{L^2 (0, T; L^2 ) } ^2 \le \frac{1}{2} \norm{A (0) }_{L^2} ^2 + C \sqrt{T} \min\left ( \frac{1}{\sqrt{\eta}}, 1 \right ). \tag{A1} \label{A1}
\end{equation}
Also, by multiplying $-\Delta_x A$ to the fourth equation of (\ref{DA}) and integrating we obtain
\begin{equation}
\frac{1}{2} \frac{d}{dt} \norm{\nabla_x A}_{L^2} ^2 + 4k \norm{\nabla_x A}_{L^2} ^2+ \nu \norm{\Delta_x A }_{L^2} ^2 \le \norm{\Delta_x A}_{L^2} \left ( \norm{u}_{L^4} \norm{\nabla_x A}_{L^4} + 4 \norm{\nabla_x u}_{L^2} \right ),
\end{equation}
and by Ladyzhenskaya's inequality
$$ \norm{u}_{L^4} ^2 \le C \norm{u}_{L^2} \norm{\nabla_x u}_{L^2}$$
applied to $\norm{u}_{L^4}$ and $\norm{\nabla_x A}_{L^4}$ and Young's inequality we have
\begin{equation}
\frac{d}{dt} \norm{\nabla_x A}_{L^2} ^2 + 8k \norm{\nabla_x A}_{L^2} ^2 + \nu \norm{\Delta_x A}_{L^2} ^2 \le \frac{C}{\nu^3} \norm{u}_{L^2} ^2 \norm{\nabla_x u}_{L^2} ^2 \norm{\nabla_x A}_{L^2} ^2 +\frac{C}{\nu} \norm{\nabla_x u}_{L^2} ^2,
\end{equation}
so we have, by Gr\"{o}nwall,
\begin{equation}
\begin{gathered}
\norm{\nabla_x A}_{L^\infty (0, T; L^2 )} ^2 + 8k \norm{\nabla_x A}_{L^2 (0, T; L^2 ) } ^2 + \nu \norm{\Delta_x A }_{L^2 (0, T: L^2 ) } ^2 \\
 \le \exp \left ( \frac{C}{\nu^3} \norm{u}_{L^\infty (0, T; L^2 ) } ^2 \norm{\nabla_x u}_{L^2(0, T; L^2 ) } ^2 \right ) \left ( \norm{\nabla_x A(0) }_{L^2} ^2 + \frac{C}{\nu} \norm{\nabla_x u }_{L^2 (0, T: L^2 ) } ^2 \right ) \le C_1,
\end{gathered} \tag{A2} \label{A2}
\end{equation}
where $C_1$ depends only on the norm of initial data. Then we take the curl $(-\partial_2, \partial_1 ) \cdot$ to the velocity equation of (\ref{DA}), multiply $\omega = \partial_1 u_2 - \partial_2 u_1 $, and integrate to obtain
\begin{equation}
\frac{1}{2} \frac{d}{dt} \norm{\omega}_{L^2} ^2 + \norm{\nabla_x \omega} _{L^2} ^2 = \int \omega \nabla_x ^{\perp} \cdot (\nabla_x \cdot \sigma ) dx. \tag{V} \label{V}
\end{equation}
\paragraph{Controlling $\int \omega \nabla_x ^{\perp} \cdot (\nabla_x \cdot \sigma ) dx$.}
Conventional estimate for the term $\int \omega \nabla_x ^{\perp} \cdot (\nabla_x \cdot \sigma ) dx$ is to use Cauchy-Schwarz inequality, which makes the term $\norm{\nabla_x \omega}_{L^2} ^2$ uncontrollable, however a closer look at the term allows us a better estimate. Note that
\begin{equation}
\begin{gathered}
\int \omega \nabla_x ^{\perp} \cdot (\nabla_x \cdot \sigma ) dx = \int \omega \left  ( (\partial_{1} ^2 - \partial_{2} ^2 ) \sigma_{12} + \partial_1 \partial_2 (\sigma_{22} - \sigma_{11} )  \right ) dx \\
= \int (\partial_1 ^2 - \partial_2 ^2 ) \omega \sigma_{12} + \partial_1 \partial_2 \omega (\sigma_{22} - \sigma_{11} ) dx \\
= \eta \int \left ( (\partial_1 ^2 - \partial_2 ^2 ) \omega A_{12} + \partial_1 \partial_2 \omega (A_{22} -A_{11} ) \right ) (\nabla_x u ) : A dx.
\end{gathered}
\end{equation}
Also, note that 
$$(\nabla_x u ) : A = \partial_1 u_1 A_{11} + \partial_1 u_2 A_{12} + \partial_2 u_1 A_{12} + \partial_2 u_2 A_{22}  $$ 
and we introduce the stream function $\psi$, that is, $u = \nabla^{\perp} \psi = (-\partial_2 \psi, \partial_1 \psi).$  Then we have 
\begin{equation*}
\omega = \Delta_x \psi, \,\, - \partial_1 u_1 = \partial_2 u_2 = \partial_1 \partial_2 \psi, \,\,  \partial_1 u_2 = \partial_1 ^2 \psi, \,\, \partial_2 u_1 = - \partial_2 ^2 \psi.
\end{equation*}
Therefore, we have
\begin{equation}
\begin{gathered}
 \int \omega \nabla_x ^{\perp} \cdot (\nabla_x \cdot \sigma ) dx  = \eta \int \left ( (\partial_1 ^2 - \partial_2 ^2 ) \omega A_{12} + \partial_1 \partial_2 \omega (A_{22} -A_{11} ) \right ) (\nabla_x u ) : A dx  \\
=  \eta \int  \left ( \Delta_x (\partial_1 ^2 -\partial_2 ^2 ) \psi A_{12} + \Delta_x \partial_1 \partial_2 \psi (A_{22} - A_{11} ) \right ) \left ( \partial_1 \partial_2 \psi (A_{22} - A_{11} ) + (\partial_1 ^2  - \partial_2 ^2 ) \psi A_{12} \right ) dx \\
= - \eta \int | A_{12} \nabla_x (\partial_1 ^2 - \partial_2 ^2 ) \psi + (A_{22} - A_{11} ) \nabla_x (\partial_1 \partial_2 ) \psi | ^2 dx + I
\end{gathered} \tag{Cancellation} \label{Cancellation}
\end{equation}
where 
\begin{equation}
\begin{gathered}
|I| \le  C \eta \int | \nabla_x (\Delta_x) \psi | |A| |\nabla_x A | dx \le C \eta \norm{\nabla_x \omega}_{L^2} \norm{\nabla_x A}_{L^2}  \le \frac{1}{2} \norm{\nabla_x \omega}_{L^2} ^2 + C \eta^2 \norm{\nabla_x A}_{L^2} ^2
\end{gathered} 
\end{equation}
Applying this to (\ref{V}), we obtain
\begin{equation}
\frac{d}{dt} \norm{\omega}_{L^2} ^2 + \norm{\nabla_x \omega}_{L^2} ^2 + 2 \eta \norm{ \left ( \nabla (\partial_k u ) : A \right )_{k} }_{L^2} ^2 \le C \eta^2 \norm{\nabla_x A}_{L^2} ^2 ,
\end{equation}
and by Gr\"{o}nwall we obtain
\begin{equation}
\begin{gathered}
\norm{\omega}_{L^\infty (0, T; L^2) }^2 + \norm{\nabla_x \omega}_{L^2 (0, T; L^2) } ^2 + 2 \eta \norm{ \left ( \nabla (\partial_k u ) : A \right )_{k} }_{L^2(0, T; L^2 ) } ^2 \\
 \le \norm{\omega (0) }_{L^2} ^2 + C \eta^2 \norm{\nabla_x A}_{L^2 (0, T; L^2 ) } ^2  = C_2,
\end{gathered} \tag{Vorticity} \label{Vorticity}
\end{equation}
where again $C_2$ depends only on the initial data. Then we multiply $(\Delta)^2 A$ to the fourth equation of (\ref{DA}) and integrate to obtain
\begin{equation}
\begin{gathered}
\frac{d}{dt} \norm{\Delta_x A}_{L^2} ^2 + 4k \norm{\Delta_x A}_{L^2} ^2 + \nu \norm{\nabla_x \Delta_x A}_{L^2} ^2 \\
= \int (\Delta_x ^2 A) \left ( - u \cdot \nabla_x A + (\nabla_x u ) A + A (\nabla_x u ) ^T - 2 (\nabla_x u : A ) A \right ) dx.
\end{gathered}
\end{equation}
The first term in the left-hand side is controlled by
\begin{equation*}
\begin{gathered}
\norm{\nabla_x \Delta_x A }_{L^2} \left ( \norm{\nabla_x u}_{L^4} \norm {\nabla_x A}_{L^4} + \norm{u}_{L^4} \norm{\Delta_x A}_{L^4} \right ) \\
 \le \frac{\nu}{4} \norm{\nabla_x \Delta_x A }_{L^2} ^2 +  \norm{\nabla_x A}_{L^2} ^2 \norm{\Delta_x A}_{L^2} ^2 + \frac{C}{\nu^2 } \norm{\nabla_x u}_{L^2} ^2 \norm{\nabla_x \omega}_{L^2} ^2 + \frac{C}{\nu^3} \norm{u}_{L^2} ^2 \norm{\nabla_x u }_{L^2} ^2 \norm{\Delta_x A}_{L^2} ^2.
\end{gathered}
\end{equation*}
The second and the third term is controlled by
\begin{equation*}
\begin{gathered}
\norm{\nabla_x \Delta_x A}_{L^2} \left ( \norm{\nabla_x \omega}_{L^2} \norm{A}_{L^\infty } + \norm {\nabla_x u}_{L^4} \norm{\nabla_x A}_{L^4} \right ) \\
\le \frac{\nu}{4} \norm{\nabla_x \Delta_x A}_{L^2} ^2 + \frac{C}{\nu} \norm{\nabla_x \omega}_{L^2} ^2 +\frac{C}{\nu} \left ( \norm{\nabla_x u}_{L^2} ^2 + \norm{\Delta_x u}_{L^2} ^2 \right ) + \frac{C}{k \nu^2 }\norm{\nabla_x A}_{L^2} ^2 + k \norm{\Delta_x A}_{L^2} ^2 
\end{gathered}
\end{equation*}
and the last term is controlled by the same term, by $\norm{A}_{L^\infty } \le 1$. Therefore, we have
\begin{equation}
\norm{\Delta_x A}_{L^\infty (0, T; L^2 ) } ^2 + \nu \norm{\nabla_x \Delta_x A}_{L^2(0, T; L^2) } ^2 \le C \exp (C_1 + C\eta^2 ) (\norm{\Delta_x A(0) }_{L^2} ^2 + C(1+\eta^4) ) = C_3 \tag{A3} \label{A3}
\end{equation}
where $C_3$ depends only on the initial data, using (\ref{A2}) instead of (\ref{A1}) when controlling $\norm{\nabla_x A}_{L^2 (0, T: L^2 ) } ^2 $. FInally, we multiply $-\Delta_x \omega$ to the vorticity equation and integrating to obtain
\begin{equation*}
\frac{d}{dt} \norm{\nabla_x \omega}_{L^2} ^2 + \norm{\Delta_x \omega}_{L^2} ^2 =  \int \Delta_x \omega u \cdot \nabla_x \omega + \int (-\Delta_x \omega) \nabla_x ^{\perp} \cdot (\nabla_x \cdot \sigma ) dx.
\end{equation*}
However, by similar calculation to (\ref{Cancellation}), we have
\begin{equation}
\begin{gathered}
\int (-\Delta_x \omega) \nabla_x ^{\perp} \cdot (\nabla_x \cdot \sigma ) dx = - \eta \int | (\nabla_x \Delta_x u ) : A | ^2 dx + I',
\end{gathered} \tag{Cancellation2} \label{Cancellation2}
\end{equation}
where
\begin{equation}
\begin{gathered}
|I'| \le C \eta \norm{\Delta_x \omega}_{L^2} \left ( \norm{\nabla_x \omega}_{L^4} \norm{\nabla_x A}_{L^4} + \norm{\omega}_{L^4}\norm{\Delta_x A}_{L^4} + \norm{\omega}_{L^2} \norm{\nabla_x A}_{L^\infty}^2  \right ) \\
\le C \eta \norm{\Delta_x \omega}_{L^2} ^{\frac{3}{2} } \norm{\nabla_x A}_{L^2} ^{\frac{1}{2} } \norm{\Delta_x A}_{L^2} ^{\frac{1}{2}} \norm{\nabla_x \omega}_{L^2} ^{\frac{1}{2} } + C \eta \norm{\Delta_x \omega} _{L^2} \norm{\Delta_x A}_{L^2} ^{\frac{1}{2} } \norm{\nabla_x \Delta_x A}_{L^2} ^{\frac{1}{2} } \norm{\omega}_{L^2} ^{\frac{1}{2} } \norm{\nabla_x \omega}_{L^2} ^{\frac{1}{2} } \\
+ C \eta \norm{\Delta_x \omega}_{L^2} \norm{\nabla_x A}_{L^2} \norm{ \nabla_x \Delta_x A}_{L^2} \norm{\omega}_{L^2}   \\
\le \frac{1}{4} \norm{\Delta_x \omega}_{L^2} ^2 + C \eta ^3 \norm{\nabla_x A}_{L^2} ^2 \norm{\Delta_x A}_{L^2} ^2 \norm{\nabla_x \omega}_{L^2} ^2 + C \eta ^2 \left ( \norm{\nabla_x A}_{L^2} ^2 \norm{\nabla_x \omega}_{L^2} ^2 + \norm{\nabla_x \Delta_x A}_{L^2} ^2 \norm{\omega}_{L^2} ^2 \right ) \\
+ C \eta ^2 \norm{\nabla_x A}_{L^2} ^2 \norm{\nabla_x \Delta_x A}_{L^2} ^2 \norm{\omega}_{L^2} ^2,
\end{gathered}
\end{equation}
and
\begin{equation}
\int \Delta_x \omega u \nabla_x \omega dx = - \int u \cdot \nabla_x \left ( |\nabla_x \omega|^2 \right ) dx - \int  ( \nabla_x u \nabla_x \omega) \nabla_x \omega dx,
\end{equation}
with Gagliardo-Nirenberg applied to conclude that
\begin{equation}
\left | \int  ( \nabla_x u \nabla_x \omega) \nabla_x \omega dx \right | \le \norm{\nabla_x u}_{L^3} \norm{\nabla_x \omega}_{L^3} ^2 \le \frac{1}{4} \norm{\Delta_x \omega}_{L^2} ^2 + C \norm{\omega}_{L^2} ^2 \norm{u}_{L^2} \norm{\nabla_x \omega}_{L^2} ^2
\end{equation}
To sum up, we have
\begin{equation}
\begin{gathered}
\frac{d}{dt} \norm{\nabla_x \omega}_{L^2} ^2 + \frac{1}{2} \norm{\Delta_x \omega}_{L^2} ^2 + \eta \norm{(\nabla_x \Delta_x u):A }_{L^2} ^2 \\
\le C ( \norm{u_0}_{L^2} \norm{\omega}_{L^2} ^2 +  \eta^3 C_1 \norm{\Delta_x A}_{L^2} ^2 + \eta^2 \norm{\nabla_x A}_{L^2} ^2 ) \norm{\nabla_x \omega}_{L^2} ^2 + C \eta^2 C_1 C_2 \norm{\nabla_x \Delta_x A}_{L^2} ^2 
\end{gathered}
\end{equation}
and by Gr\"{o}nwall we have
\begin{equation}
\begin{gathered}
\norm{\nabla_x \omega}_{L^\infty (0, T; L^2)} ^2 + \frac{1}{2} \norm{\Delta_x \omega}_{L^2 (0, T; L^2 ) } + \eta \norm{(\nabla_x \Delta_x u ) : A }_{L^2 (0, T; L^2 ) }^2  \\
\le \exp \left ( C_4 (1+ \eta^3 C_3 )  \right ) \left ( \norm{\nabla_x \omega (0) }_{L^2 } ^2 + C_5 (1+ \eta^4 ) \right ) 
\end{gathered} \tag{GV} \label{GV}
\end{equation}
where $C_4, C_5$ depend only on the initial data (and parameters except for $\eta$).
\begin{remark}
Same cancellation argument works for the original Doi model (\ref{Doi}), so we can prove global well-posedness of diffusive Doi model for any $\eta >0$. In the presence of an external forcing (applied to the fluid field), the global well-posedness can still be proved by similar estimates.
\end{remark}

\subsection{Local well-posedness}

In this section we prove the local well-posedness. Before we start, we briefly check the difficulty in the usual contraction mapping scheme. We define the Banach space $B = X \times Y$, where
$$ X = L^\infty (0, T_0 ; \mathbb{P}W^{2,2} (\mathbb{T}^2 ) ) \cap L^2 (0, T_0 ; \mathbb{P}W^{3,2} (\mathbb{T}^2 ) ) $$
and 
$$ Y = L^\infty (0, T_0 ; W^{2,2} (\mathbb{T}^2 ) ) \cap L^2 (0, T_0 ; W^{3,2} (\mathbb{T}^2 ) ). $$
We set up a fixed point equation $U = F(U)$ in $B$ for $U = (u, A)$, where $F(U) = (u^{new}, A^{new})$ given by
\begin{equation}
\begin{gathered}
u^{new} (t) = e^{t\Delta_x} u_0 + Q_1 (u, u) + L (u, A), \\
A^{new} (t) = e^{ (\nu \Delta_x - 4k )t } A_0 + Q_2 (u, A) + \frac{1}{2} \left ( 1 - e^{-4kt} \right ) \mathbb{I}_2 ,
\end{gathered}
\end{equation}
where 
\begin{equation}
Q_1 (u, v) = - \int_0 ^t e^{(t-s) \Delta_x } \mathbb{P} (u(s) \cdot \nabla_x v(s) ) ds,
\end{equation}
\begin{equation}
L (u, A) =  \int_0 ^t  e^{(t-s) \Delta_x } \eta \mathbb{P} (\mathrm{div}_x ( ( \nabla_x u (s) : A (s) ) A (s) ) ) ds,
\end{equation}
and
\begin{equation}
Q_2 (u, A) = \int_0 ^t e^{  (t-s) ( \nu \Delta_x - 4k ) }(-u(s) \cdot \nabla_x A(s) + (\nabla_x u (s) ) A(s) + A(s) (\nabla_x u (s) ) ^T - 2 (\nabla_x u(s) : A(s) ) A(s) ) ds.
\end{equation}
We can easily check that 
\begin{equation}
\begin{gathered}
\norm{Q_1 (u, v)}_X \le C \sqrt{T_0} \norm{u}_X \norm{v}_X, \\
\norm{L(u, A)}_X \le C \norm{u}_X \norm{A}_Y, \\
\norm{Q_2 (u, A)}_Y \le C \sqrt{T_0}  \norm{u}_X \left ( \norm{A}_Y  +  \norm{A}_Y ^2 \right ).
\end{gathered}
\end{equation}
For example, if we let $q = Q_2 (u, A)$, then $q$ is the solution of the equation
\begin{equation}
\partial_t q -  \nu \Delta_x q + 4k q = R, \,\, q(0) = 0,
\end{equation}
where 
$$ Q_2 (u, A) = \int_0 ^t e^{(t-s) (\nu \Delta_x - 4k ) } R(s) ds. $$
Then by the standard estimate we obtain 
$$ \norm{q}_{L^\infty (0, T_0; W^{2,2} ) \cap L^2 (0, T_0; W^{3,2} ) } ^2 \le C \norm{R}_{L^2 (0, T; W^{1,2} ) } ^2, $$
and in this case 
\begin{equation}
\begin{gathered}
C \norm{R (s) }_{W^{1,2} }  \le \norm{u(s)}_{L^\infty} \norm{\nabla_x A(s) }_{L^2} + \norm{\nabla_x u(s) }_{L^4} \norm{\nabla_x A(s)}_{L^4} + \norm{u(s)}_{L^\infty} \norm{\Delta_x A(s) }_{L^2} \\
+ \norm{\nabla_x u(s) }_{L^2} \norm{A(s)}_{L^\infty} + \norm{\Delta u(s) }_{L^2} \norm{A(s) }_{L^\infty} + \norm{\nabla_x u(s) }_{L^2} \norm{A(s)}_{L^\infty } ^2 \\
+ \norm{\Delta_x u(s) }_{L^2} \norm{A(s)}_{L^\infty } ^2 + \norm{\nabla_x u(s) }_{L^4} \norm{\nabla_x A(s) }_{L^4} \norm{A}_{L^\infty }  \\
\le C \norm{u(s)}_{W^{2,2} } \norm{A(s)}_{W^{2,2}} (1 + \norm{A(s) }_{W^{2,2} } ) \le C \norm{u}_X \norm{A}_Y (1 + \norm{A}_Y ).
\end{gathered}
\end{equation}
The problem is in $L(u, A)$: to find a contraction mapping we need to guarantee that $F$ is a mapping from a ball $B(0, R) \subset B$ to itself: however, the bounds for $\norm{u^{new} (t)}_{X}$ that we can obtain from this method is $U_0 + \norm{u}_X (C_1 T_0 \norm{u}_X + C_2 \norm{A}_Y)$, and if $\norm{A}_Y \ge \frac{1}{C_2 }$ then this method fails to bound which holds for both $\norm{u}_X$ and $\norm{u^{new} }_X$.  Therefore, instead of contraction mapping principle, we use an approximation scheme for $u$ equation and go with contraction mapping principle for $A$ equation.
\paragraph{Approximation scheme.} Suppose that $u_n \in X$ is given with $\norm{u_n}_X < \infty $ and $u_n (0) = u_0$. We solve
\begin{equation}
\left \{
\begin{gathered}
\partial_t A_n + u_n \cdot \nabla_x A_n = (\nabla_x u_n ) A_n + A_n (\nabla_x u^T) - 2 (\nabla_x u_n : A_n ) A_n + \nu \Delta_x A_n + 2k (2 A_n - \mathbb{I}_2 ), \\
A_n (0) = A_0.
\end{gathered} \right .
\end{equation}
For this equation contraction mapping works well, and local well-posedness is guaranteed, and proposition \ref{PD}, a priori estimates (\ref{A1}), (\ref{A2}), (\ref{A3}) are satisfied except that all the estimates concerning $u$ are replaced by $u_n$. This means that $A_n$ is guaranteed to exist until the time of existence of $u_n$. Our approximation scheme for $u_{n+1}$ is the following.
\begin{equation}
\left \{
\begin{gathered}
\partial_t u_{n+1} + u_{n+1} \cdot \nabla_x u_{n+1} = - \nabla_x p_{n+1} + \Delta_x u_{n+1} +  \nabla_x \cdot \mathcal{J}_{n+1} \left ( (\eta ( \mathcal{J}_{n+1} \left (\nabla_x u_{n+1} \right ) : A_n ) A_n ) \right ), \\
\nabla_x \cdot u_{n+1 } = 0, \,\, u_{n+1} (0) = u_0,
\end{gathered} \right . \tag{UA} \label{UA}
\end{equation}
where $\mathcal{J}_{n+1} f $ is the orthogonal projection of $f$ into space spanned by eigenvectors corresponding to first $(n+1)$-th eigenvalues. Therefore, $\mathcal{J}_{n+1}$ s are symmetric (in fact self-adjoint) and they commute with differentiation. Then we can prove the local well-posedness of the system (\ref{UA}) via contraction mapping, since for the modified polymer-induced nonlinear structure
\begin{equation*}
L^{n+1} (u_{n+1} , A_n) = \int_0 ^t e^{(t-s) \Delta_x} \eta \mathbb{P} ( \nabla_x \cdot \mathcal{J}_{n+1} (( \mathcal{J}_{n+1} (\nabla_x u_{n+1} ) : A_n ) A_n )) (s) ds
\end{equation*}
has the estimate
\begin{equation*}
\norm{L^{n+1} (u_{n+1}, A_n )}_{X} \le \eta (n+1) C \sqrt{T_0} \norm{u_{n+1} }_X \norm{A_n}_{Y} ^2.
\end{equation*}
We then find an estimate of $\norm{u_{n+1} }_{X}$ independent of $n$, which allow us to guarantee existence of the solution $u_{n+1}$ until the time of existence of $u_{n}$, and also the existence of a weak limit of the sequence $\{u_n\}_n$. This estimate can be obtained in the same manner as (\ref{Energy}), (\ref{Vorticity}), and (\ref{GV}), which is essentially the usual energy method together with the cancellation structures (\ref{Cancellation}), (\ref{Cancellation2}), and those estimates hold with the bound depending only on initial data and $T_0$, independent of $n$. Then we have the uniform bounds
$$ \norm{u_n }_X \le D_1, \norm{A_n} _Y \le D_2,$$
so by compactness we have weak limits $u \in X, A \in Y$, and we can check that for some subsequence of $(u_n, A_n)$, again denoted by $(u_n, A_n)$
\begin{equation}
\begin{gathered}
(\nabla_x u_n) A_n \rightarrow ( \nabla_x u) A \,\,\mathrm{in}\,\, L^2(0,T; L^2) \\
(\nabla_x u_n : A_n ) A_n \rightarrow ( \nabla_x u : A ) A \,\,\mathrm{in}\,\, L^2(0,T; L^2)\\
u_n \cdot \nabla_x u_n \rightarrow u \cdot \nabla_x u \,\,\mathrm{in}\,\, L^2(0,T; L^2 ) \\
\end{gathered}
\end{equation}
and
\begin{equation}
\nabla_x \cdot \mathcal{J}_{n} ( (\mathcal{J}_{n} (\nabla_x u_n ):A_n ) A_n ) \rightarrow \nabla_x \cdot ((\nabla_x u  : A ) A) \,\, \mathrm{in} \,\, L^2(0, T; W^{-1, 2} ).
\end{equation}
Note that $u_n, A_n \in L^\infty (0, T_0 ; W^{2,2} )$ are uniformly bounded and $\partial_t u_n, \partial_t A_n \in L^2 (0, T; W^{1,2} ) $ are also uniformly bounded, by Aubin-Lions there is a subsequence of $A_n$ converging to $A$ and $u_n$ converging to $u$ strongly in $C([0, T]; W^{2-\epsilon, 2})$ for small $\epsilon >0$. 
For the first convergence, note that
\begin{equation}
\begin{gathered}
\norm{(\nabla_x u_n ) A_n - (\nabla_x u) A }_{L^2} \le \norm{\nabla_x u_n}_{L^\infty} \norm{A_n - A}_{L^2} + \norm{\nabla_x  (u_n - u)}_{L^2} \norm{A}_{L^\infty} \\
\le D_1 \norm{A_n - A}_{L^2} + \norm{ u_n - u}_{W^{1,2} } D_2
\end{gathered}
\end{equation}
and by Aubin-Lions we are done. Other two can be shown similarly. The last convergence is also straightforward:
\begin{equation}
\begin{gathered}
\mathcal{J}_n \left (  \left ( \mathcal{J}_n \left ( \nabla_x u_n \right ) : A_n \right ) A_n  \right ) - (\nabla_x u : A) A \\
= I_1 + I_2 + I_3 + I_4 + I_5,
\end{gathered}
\end{equation}
where
\begin{equation}
\begin{gathered}
I_1 =  \mathcal{J}_n \left (  \left ( \mathcal{J}_n \left ( \nabla_x u_n \right ) : A_n \right ) (A_n - A)  \right ),  \,\, I_2 =  \mathcal{J}_n \left (  \left ( \mathcal{J}_n \left ( \nabla_x u_n \right ) : A_n - A \right ) A  \right ), \\
I_3 =  \mathcal{J}_n \left (  \left ( \mathcal{J}_n \left ( \nabla_x u_n - \nabla_x u \right ) : A \right ) A  \right ), \,\, I_4 = \mathcal{J}_n \left (  \left ( ( \mathcal{J}_n \left ( \nabla_x u \right ) - \nabla_x u ) : A \right ) A  \right ), \\
I_5 = \mathcal{J}_n \left (  \left ( \left ( \nabla_x u \right ) : A \right ) A  \right ) - (\nabla_x u : A) A.
\end{gathered}
\end{equation}
We have
\begin{equation}
\norm{I_1}_{L^2} , \norm{I_2}_{L^2} \le D_2 \norm{\nabla_x u_n}_{L^\infty} \norm{(A_n - A)}_{L^2} \le D_2 \norm{(A_n - A)}_{L^\infty (0, T; W^{1,2} ) } \norm{ u_n}_{W^{3,2} }.
\end{equation}
By Aubin-Lions lemma, $\norm{I_1}_{L^2 (0, T_0; L^2  ) } + \norm{I_2 }_{L^2 (0, T_0; L^2 ) } \rightarrow 0$ as $n \rightarrow \infty$. $I_3$ can be similarly treated by Aubin-Lions lemma, and $I_4$, $I_5$ can be treated by the property of $\mathcal{J}_n$.
\paragraph{Uniqueness of the solution.} Suppose that $(u, A) \in B$ and $(v, B) \in B $ are two solutions to the initial value problem (\ref{DA}). Then we have
\begin{equation}
\begin{gathered}
\partial_t (u-v) + u \cdot \nabla_x (u-v) + (u-v) \cdot \nabla_x v = -\nabla_x (p_u - p_v ) + \Delta_x (u-v) \\
+ \eta \nabla_x \cdot ( ((\nabla_x u - \nabla_x v ) :A) A + ( (\nabla_x v ) : (A-B)  ) A + ( (\nabla_x v ) : B ) (A-B) ),\\
\partial_t (A-B) + u \cdot \nabla_x (A-B) + (u-v) \cdot \nabla_x B = (\nabla_x (u-v) ) A + (\nabla_x v)(A-B) \\
+ A (\nabla_x (u-v) )^T + (A-B) (\nabla_x v )^T  - 4k (A - B) + \nu \Delta_x (A-B) \\
- 2 ( ((\nabla_x u - \nabla_x v ) :A) A + ( (\nabla_x v ) : (A-B)  ) A + ( (\nabla_x v ) : B ) (A-B) )
\end{gathered}
\end{equation}
and standard relative energy estimate gives
\begin{equation}
\begin{gathered}
\frac{d}{dt} \norm{u-v}_{L^2} ^2 + \norm{\nabla_x (u-v) }_{L^2} ^2 + 2 \eta \norm{\nabla_x (u-v) : A }_{L^2} ^2 \le C \norm{\nabla_x v}_{L^2}  \norm{A-B}_{L^2} \norm{u-v}_{L^2}, \\
\frac{1}{2} \frac{d}{dt} \norm{A-B}_{L^2} ^2 + \nu \norm{\nabla_x (A-B)}_{L^2} ^2 + 4k \norm{A-B}_{L^2} ^2  \\ 
\le C \norm{A-B}_{L^2} \left (\norm{u-v}_{L^2} \norm{\nabla_x A}_{L^2}  +  \norm{A-B}_{L^2} \norm{\nabla_x v}_{L^2} + \norm{ (u-v) }_{L^2} \norm{A}_{W^{1,2}} +  \norm{A-B}_{L^2} \norm{\nabla_x v}_{L^2} \right ) \\
\end{gathered}
\end{equation}
and by a priori estimates on $u, v, A, B$ we have
\begin{equation}
\frac{d}{dt} \left ( \norm{u-v}_{L^2} ^2 + \norm{A-B}_{L^2} ^2 \right ) + \norm{\nabla_x (u-v)}_{L^2} + \nu \norm{A-B}_{L^2} ^2 \le C \left ( \norm{u-v}_{L^2} ^2 + \norm{A-B}_{L^2} ^2 \right ) 
\end{equation}
and by Gr\"{o}nwall inequality $u=v$, $A=B$.

\section{A priori estimate for (\ref{Doi})}

In this section, we establish a priori estimates for (\ref{Doi}). More precisely, we prove the following theorem:
\begin{theorem}
Let $(u, f)$ be a stroong solution of (\ref{Doi}) on $[0, T]$ with initial data satisfying $M_0 (0) \in L^\infty, \sigma_E (0) \in W^{1,2}, M_4 (0) \in W^{2,2}, M_6 (0) \in W^{1,2}$, $\int_{\mathbb{T}^2} \int_{\mathbb{S}^1} \left ( f \log f - f + 1 \right ) (0) dm dx < \infty$, and $u_0 \in \mathbb{P}W^{2,2} $. Then $(u, f)$ satisfies the bounds (\ref{FE}), (\ref{M0B}), (\ref{MnB1}) for $n=4$, (\ref{ESB}), (\ref{V-Doi}), (\ref{M4B2}), and (\ref{V2-Doi}).
\end{theorem}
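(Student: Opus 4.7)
The plan is to mirror the hierarchy of a priori estimates carried out for $(\ref{DA})$ in Section $\ref{globalwellsmall}$, now tracking the open moment chain $\{M_n\}$ in place of the closed second-moment tensor $A$. I would establish the seven bounds in the order listed in the statement, so that each estimate may invoke its predecessors. First, I derive the free energy bound $(\ref{FE})$ by testing the momentum equation against $u$ and the Fokker--Planck equation against $\log f$. The key computation is that the stretching term tested against $\log f$ integrates by parts in $m$ to $\int(\nabla_x u):\sigma_E \, dx$ (using $\nabla_m \log f = \nabla_m f/f$ and the identity $\partial_\theta(m^\perp \cdot (\nabla_x u)m) = -2(\nabla_x u):m\otimes m$ in incompressible flow), which exactly cancels the elastic-stress contribution from the velocity balance. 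What remains is the dissipation $\norm{\nabla_x u}_{L^2}^2 + \eta\int\int |(\nabla_x u):m\otimes m|^2 f\,dm\,dx$, together with the $k$- and $\nu$-Fisher information dissipations. The pointwise bound $(\ref{M0B})$ then follows at once: integrating Fokker--Planck in $m$ kills both the $k\Delta_m$ piece and the stretching divergence, so $M_0$ satisfies $\partial_t M_0 + u\cdot\nabla_x M_0 = \nu\Delta_x M_0$, and the maximum principle applies.

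For the higher moments, I multiply Fokker--Planck by $m^I$ with $|I|=n$ and integrate in $m$. Using $\nabla_m m^I$ and $\Delta_m m^I$ together with the tangential projection $P_{m^\perp}$, the $m$-integration produces a parabolic equation for $M_n$ whose source is bilinear in $\nabla_x u$ and moments of order $\le n$, which is the exact analogue of the third equation of $(\ref{DA})$. Standard $L^2$ and $H^k$ testing, combined with Ladyzhenskaya and Young in the style of $(\ref{A1})$--$(\ref{A3})$, yields $(\ref{MnB1})$ for $n=4$ and, specialising the same computation to $n=2$, the bound $(\ref{ESB})$ on $\sigma_E = 2(M_2 - \tfrac{1}{2}M_0\mathbb{I}_2)$. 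The higher bound $(\ref{M4B2})$ then comes by testing the $M_4$ equation against $\Delta_x^2 M_4$, which is available once the first vorticity estimate has already furnished $\nabla_x u \in L^2(0,T; W^{2,2})$.

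The vorticity estimates are the heart of the argument. For $(\ref{V-Doi})$, I decompose $\sigma = \sigma_E + \sigma_V$: the $\sigma_E$ piece is absorbed by $\norm{\sigma_E}_{W^{1,2}}$ from $(\ref{ESB})$. For $\sigma_V$ I apply the stream-function manipulation of $(\ref{Cancellation})$ pointwise in $m$ under the $dm$-integral, producing
\begin{equation*}
\int \omega \, \nabla_x^\perp \cdot (\nabla_x \cdot \sigma_V ) \, dx = -\eta \int \int \bigl| m_1 m_2 \, \nabla_x (\partial_1^2 - \partial_2^2)\psi + (m_2^2 - m_1^2) \, \nabla_x \partial_1 \partial_2 \psi \bigr|^2 f \, dm \, dx + I,
\end{equation*}
where $I$ collects the remainders coming from derivatives landing on $f$ rather than $u$; these are bounded by $\norm{\nabla_x \omega}_{L^2}$ times a $\norm{\nabla_x M_2}_{L^2}$-type factor controlled by $(\ref{ESB})$. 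Young's inequality absorbs a fraction of $\norm{\nabla_x\omega}_{L^2}^2$ on the left, and Gr\"onwall closes $(\ref{V-Doi})$. The second vorticity bound $(\ref{V2-Doi})$ follows from an $m$-integrated version of $(\ref{Cancellation2})$, whose remainder this time carries two derivatives of $f$ and therefore brings $\nabla_x^2 M_4$ (controlled by $(\ref{M4B2})$) and $\nabla_x M_6$ into play.

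\textbf{Main obstacle.} Unlike the closed DA system, the moment hierarchy here is infinite: $\sigma_V$ is degree four in $m$, so its spatial derivative couples the first-vorticity cancellation remainder to $M_4$, while the second cancellation reaches $M_6$. The algebraic structure of the cancellation is identical to the DA case, so the substantive difficulty is not the cancellation itself but the bookkeeping: at each level one must verify that the remainder terms are controlled strictly by moment norms already established, so that the sequence of estimates truncates at a finite level without circular dependence. Getting this ordering right is precisely what dictates the hypotheses $M_4(0) \in W^{2,2}$ and $M_6(0) \in W^{1,2}$ in Theorem $\ref{Main}$, and organising the seven estimates so that the truncation is self-consistent is the main analytical challenge.
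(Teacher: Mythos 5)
Your proposal follows the paper's argument essentially step for step: the same free-energy estimate, the maximum principle for $M_0$, the upward-coupled moment hierarchy, and --- crucially --- the same stream-function cancellation performed pointwise in $m$ under the $dm$-integral, which is exactly (\ref{C-Doi2D}) and (\ref{C2-Doi2D}); your displayed identity for $\int\omega\,\nabla_x^\perp\cdot(\nabla_x\cdot\sigma_V)\,dx$ agrees with the paper's $-\eta\int\int|\nabla_x((\nabla_x u):m\otimes m)|^2 f\,dm\,dx$ after substituting $u=\nabla^\perp\psi$.

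One caution on the bookkeeping, which you yourself single out as the main challenge. Three statements in the body are off, only partly corrected by your closing paragraph. (i) The source of the $M_n$ equation (\ref{ME}) is $T_{2,n}(\nabla_x u, M_{n+2})$, i.e., it involves moments of order $n+2$, not ``order $\le n$'' --- this is the whole reason the hierarchy is open. (ii) The remainder $I$ of the first cancellation is $T_3(\nabla_x\nabla_x u,\nabla_x u,\nabla_x M_4)$: since $((\nabla_x u):m\otimes m)^2$ is quartic in $m$, the derivative landing on $f$ produces $\nabla_x M_4$, not a ``$\nabla_x M_2$-type factor controlled by (\ref{ESB})''; it is controlled by (\ref{MnB1}) for $n=4$, which is precisely why that bound must precede (\ref{V-Doi}). (iii) $M_6$ does not enter through the second cancellation remainder --- that remainder is again purely quartic in $m$ and involves only $\nabla_x M_4$ and $\Delta_x M_4$. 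Rather, $M_6$ enters through (\ref{MnB2}) with $n=4$: the source $T_{2,4}(\nabla_x u, M_6)$ tested against $\Delta_x^2 M_4$ requires control of $\nabla_x M_6$, hence the hypothesis $M_6(0)\in W^{1,2}$ and the constant $B_{6,6}$ in (\ref{M4B2}). With these corrections the ordering (\ref{FE}) $\to$ (\ref{M0B}) $\to$ (\ref{MnB1}) for $n=4,6$ $\to$ (\ref{ESB}) $\to$ (\ref{V-Doi}) $\to$ (\ref{M4B2}) $\to$ (\ref{V2-Doi}) closes without circularity, exactly as in the paper.
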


\subsection{Free energy estimate}

The first one is the well-known free energy estimate.
\begin{equation}
\begin{gathered}
\frac{d}{dt} \left ( \frac{1}{2} \norm{u}_{L^2} ^2 + \int_{\mathbb{T}^2} \int_{\mathbb{S}^1} f \log f - f + 1 dm dx \right ) + k \int_{\mathbb{T}^2} \int_{\mathbb{S}^1} \frac{|\nabla_m f |^2}{f} dm dx  \\
+ \nu \int_{\mathbb{T}^2} \int_{\mathbb{S}^1} \frac{|\nabla_x f|^2}{f} dm dx + \eta \int ( ( \nabla_x u ) : m \otimes m )^2 f dm dx + \norm{\nabla_x u}_{L^2} ^2 dx = 0.
\end{gathered}
\end{equation}
From this we can obtain the bound
\begin{equation}
\norm{u}_{L^\infty (0, T; L^2 ) } ^2 + \sup_{t \in [0, T ] } \int_{\mathbb{T}^2} \int_{\mathbb{S}^1} (f \log f - f + 1 ) (t) dm dx + \norm{\nabla_x u}_{L^2 (0, T; L^2 ) } ^2  + \norm{\nabla_{m, x} \sqrt{f} }_{L^2 (0, T; L^2 (\mathbb{T}^2 \times \mathbb{S}^1)) } ^2  \le B_1 \tag{FE} \label{FE}
\end{equation}
where 
$$ B_1 = C \norm{u_0}_{L^2} ^2 + \int_{\mathbb{T}^2} \int_{\mathbb{S}^1} \left ( f \log f - f + 1 \right ) (0) dm dx $$
with $C$ a constant depending only on parameters $k, \nu$ and $\eta$.

\subsection{Estimate on moments}

In this section, we investigate bounds on moments, which are useful in establishing bounds of elastic and viscous stresses.
\paragraph{Local coordinates.} 
To study the evolution of moments and elastic tensors, it is useful to write the Fokker-Planck equation of (\ref{Doi}) in the local expression. The configuration space $\mathbb{S}^1$ can be represented by $ m(\theta ) = (\cos \theta, \sin \theta)$, and the Fokker-Planck equation of (\ref{Doi}) is
\begin{equation}
\partial_t f + u \cdot \nabla_x f = k \partial_\theta ^2 f + \nu \Delta_x f - \partial_\theta \left ( m(\theta) ^\perp \cdot  \left ( (\nabla_x u ) m (\theta ) \right ) f  \right ) \tag{FPth} \label{FPth}
\end{equation}
where 
$$ m(\theta) ^\perp \cdot ((\nabla_x u) m(\theta ) ) = \frac{1}{2} \cos 2\theta (\partial_1 u_2 + \partial_2 u_1 ) + \frac{1}{2} (\partial_1 u_2 - \partial_2 u_1 ) - \frac{1}{2} \sin 2\theta (\partial_1 u_1 - \partial_2 u_2 ). $$
Also, the expression for elastic stress can be rewritten as:
\begin{equation}
\sigma_E = \int_0 ^{2\pi} \frac{1}{2} f \left ( \cos 2 \theta \left ( \begin{matrix} 1 && 0 \\ 0 && -1 \end{matrix} \right ) + \sin 2 \theta \left ( \begin{matrix} 0 && 1 \\ 1 && 0 \end{matrix} \right ) \right )  d\theta \tag{ESth} \label{ESth}
\end{equation}
and
\begin{equation}
\sigma_V =  \int_0 ^{2\pi} \frac{\eta f }{4} \left ( \cos 2 \theta (\partial_1 u_1 - \partial_2 u_2 ) + \sin 2 \theta (\partial_2 u_1 + \partial_1 u_2 ) \right ) \left (  \mathbb{I}_2 + \cos 2 \theta \left (\begin{matrix}  1 && 0 \\ 0 && -1 \end{matrix} \right ) + \sin 2 \theta \left (\begin{matrix} 0 && 1 \\ 1 && 0 \end{matrix} \right ) \right )  d \theta.  \tag{VSth} \label{VSth}
\end{equation}
\paragraph{Evolution of moments.} The evolution equation for $M_n$, $n > 0$ is derived from (\ref{Doi}):
\begin{equation}
\partial_t M_n + u \cdot \nabla_x M_n = T_{1, n} M_n + \nu \Delta_x M_n + T_{2,n} (\nabla_x u, M_{n+2} ) \tag{ME} \label{ME}
\end{equation}
where $T_{1, n}$ is a constant-coefficient (depending on $n$) matrix and $T_{2, n} (A, B)$ is a constant-coefficient (also depending on $n$) bilinear tensor on $A$ and $B$. On the other hand, when $n=0$, the evolution equation for $M_0$ is given by
\begin{equation}
\partial_t M_0 + u \cdot \nabla_x M_0 = \nu \Delta_x M_0 \tag{M0} \label{M0}
\end{equation}
and from this we obtain
\begin{equation}
\begin{gathered}
\frac{d}{dt} \frac{1}{2} \norm{M_0}_{L^2} ^2 + \nu \norm{\nabla_x M_0 }_{L^2} ^2 = 0, \\
\frac{d}{dt} \norm{\nabla_x M_0}_{L^2} ^2 + \nu \norm{\Delta_x M_0 } _{L^2} ^2 \le C \norm{u}_{L^2} ^2 \norm{\nabla_x u}_{L^2} ^2 \norm{\nabla_x M_0}_{L^2} ^2 \le C B_1 \norm{\nabla_x u}_{L^2} ^2 \norm{\nabla_x M_0 }_{L^2} ^2
\end{gathered}
\end{equation}
and so 
\begin{equation}
\begin{gathered}
\norm{M_0}_{L^\infty (0, T; L^2) } ^2 + \norm{\nabla_x M_0}_{L^2(0, T; L^2) } ^2 \le B_2, \\
\norm{\nabla_x M_0}_{L^\infty (0, T; L^2) } ^2 + \norm{\Delta_x M_0}_{L^2( 0, T; L^2 ) } ^2 \le B_3, \\
\end{gathered} 
\end{equation}
and
\begin{equation}
\norm{M_0}_{L^\infty (0, T; L^\infty ) } \le B_4, \tag{M0B} \label{M0B}
\end{equation}
where
\begin{equation*}
\begin{gathered}
B_2 = C \norm{M_0 (0)}_{L^2} ^2, \,\, B_3 = C \exp (C B_1 ^2 ) \norm{\nabla_x M_0 (0) } _{L^2} ^2, \,\, B_4 = \norm{M_0 (0)}_{L^\infty}
\end{gathered}
\end{equation*}
where again $C$ depends only on parameters, and estimate (\ref{M0B}) follows from the maximum principle. One simple but important observation is the following:
\begin{equation}
| M_n ^I (x, t) | \le M_0 (x, t) \tag{Obs} \label{Obs}
\end{equation}
due to positivity of $f$ and compactness of $\mathbb{S}^1$. By (\ref{Obs}), we obtain estimates for $M_n$, $n>0$; from
\begin{equation}
\begin{gathered}
\frac{1}{2} \frac{d}{dt} \norm{M_n}_{L^2} ^2 + \nu \norm{\nabla_x M_n }_{L^2} ^2 \le C_n \left ( \norm{M_n}_{L^2} ^2 + \norm{\nabla_x u}_{L^2} \norm{M_n}_{L^2} B_4 \right ), \\
\frac{1}{2} \frac{d}{dt} \norm{\nabla_x M_n}_{L^2} ^2 + \nu \norm{\Delta_x M_n}_{L^2} ^2 \le \left ( \norm{u}_{L^4} \norm{\nabla_x M_n}_{L^4}  + C_n  \norm{\nabla_x u}_{L^2}   \right ) \norm{\Delta_x M_n}_{L^2} + C_n \norm{\nabla_x M_n }_{L^2} ^2, \\
\end{gathered}
\end{equation}
where $C_n$ depends only on $n$ and parameters ($k$ in these cases), we have
\begin{equation}
\begin{gathered}
\norm{M_n}_{L^\infty (0, T; L^2 ) } ^2 + \norm{\nabla_x M_n}_{L^2 (0, T; L^2) } ^2 \le B_{5,n}, \\
\norm{\nabla_x M_n}_{L^\infty (0, T; L^2) } ^2 + \norm{\Delta_x M_n}_{L^2 (0, T; L^2 )} ^2 \le B_{6,n}
\tag{MnB1} \label{MnB1}
\end{gathered}
\end{equation}
where
\begin{equation*}
\begin{gathered}
B_{5,n} = C \exp (C_n T) \left ( \norm{M_n (0)}_{L^2} ^2 + B_4 ^2 B_1 \right ), \\
B_{6,n} = C \exp ( C_n T + B_1 ^2 )  \left ( \norm{\nabla_x M_n (0)}_{L^2} ^2 + C_n B_1 \right ).
\end{gathered}
\end{equation*}
Also, similar to estimate (\ref{A3}), we have
\begin{equation}
\begin{gathered}
\frac{d}{dt} \norm{\Delta_x M_n }_{L^2} ^2 + \nu \norm{\nabla_x \Delta_x M_n }_{L^2} ^2 \le C_n \norm{\Delta_x M_n}_{L^2} ^2  \\
+ C_n \norm{\nabla_x \Delta_x M_n}_{L^2} \left ( \norm{u}_{L^4} \norm{\Delta_x M_n}_{L^4} + \norm{\nabla_x u}_{L^4} \norm{\nabla_x M_n}_{L^4} + \norm{\Delta_x u}_{L^2} \norm{M_{n+2} }_{L^\infty} + \norm{\nabla_x u}_{L^4} \norm{\nabla_x M_{n+2} }_{L^4} \right ).
\end{gathered}
\end{equation}
After burying $\norm{\nabla_x \Delta_x M_n}_{L^2}$ term using Cauchy-Schwarz inequality, the first term is bounded by
$$ C_n \norm{u}_{L^2 } ^2 \norm{\nabla_x u}_{L^2} ^2 \norm{\Delta_x M_n}_{L^2} ^2 \le C_n B_1 \norm{\nabla_x u}_{L^2} ^2 \norm{\Delta_x M_n}_{L^2} ^2, $$
the second term is bounded by
$$ C_n \left (\norm{\Delta_x u}_{L^2} ^2 \norm{\nabla_x M_n}_{L^2} ^2 + \norm{\nabla_x u}_{L^2} ^2 \norm{\Delta_x M_n}_{L^2} ^2 \right ) \le  C_n \norm{\nabla_x u}_{L^2} ^2 \norm{\Delta_x M_n }_{L^2} ^2 + C_n B_{6,n} \norm{\Delta_x u}_{L^2 } ^2, $$
the third term is bounded by
$$ C_n B_4 ^2 \norm{\Delta_x u}_{L^2} ^2,$$
and the fourth term is bounded by
$$ C_n \left ( \norm{\nabla_x u}_{L^2} ^2 \norm{\Delta_x M_{n+2} } _{L^2} ^2 + \norm{\nabla_x M_{n+2} }_{L^2} ^2 \norm{\Delta_x u}_{L^2} ^2 \right ) \le C_n B_{6 ,{n+2}} \norm{\Delta_x u}_{L^2} ^2 + C_n \norm{\nabla_x u}_{L^2} ^2 \norm{\Delta_x M_{n+2} }_{L^2} ^2. $$
To sum up, we have
\begin{equation}
\begin{gathered}
\frac{d}{dt} \norm{\Delta_x M_n}_{L^2} ^2 + \nu \norm{\nabla_x \Delta_x M_n}_{L^2} ^2 \le C_n \left ( 1 + (B_1 +1) \norm{\nabla_x u}_{L^2} ^2 \right ) \norm{\Delta_x M_n }_{L^2} ^2 \\
+ C_n \left (  (B_{6,n} + B_4 ^2 + B_{6 ,{n+2 }} ) \norm{\nabla_x u}_{L^2} ^2 + \norm{\nabla_x u}_{L^2} ^2 \norm{\Delta_x M_{n+2} }_{L^2} ^2 \right )
\end{gathered}
\end{equation}
and therefore
\begin{equation}
\norm{\Delta_x M_n}_{L^\infty (0, T; L^2 ) } ^2 + \norm{\nabla_x \Delta_x M_n}_{L^2 (0, T; L^2 ) }  ^2 \le B_{7, n} \left ( B_{8, n} + \norm{\nabla_x u}_{L^\infty (0, T; L^2 ) } ^2 B_{6, n+2 } \right ) \tag{MnB2} \label{MnB2}
\end{equation}
where
\begin{equation*}
\begin{gathered}
B_{7, n} =  C_n \exp \left ( T + (B_1 + 1) B_1 \right ), \,\, B_{8, n} = B_1 (B_{6,n} + B_4 ^2 + B_{6 ,{n+2 }} ) + \norm{\Delta_x M_n (0)}_{L^2} ^2 .
\end{gathered}
\end{equation*}

\subsection{Control of elastic stress}

Elastic stress can be bounded by bounds on $M_2$, since each component of $\sigma_E$ is a component of $M_2$, but we can get better estimates:
\begin{equation}
\partial_t \sigma_E + u \cdot \nabla_x \sigma_E = -4k \sigma_E + \nu \Delta_x \sigma_E + T_{2, 2} ' (\nabla_x u, M_4 )
\end{equation}
where $T_{2,2} ' $ is another constant-coefficient bilinear tensor. Then
\begin{equation}
\begin{gathered}
\norm{\sigma_E}_{L^\infty \cap L^2 (0, T; L^2) } + \norm{\nabla_x \sigma_E}_{L^2 (0, T; L^2)} ^2 \le C (\norm{\sigma_E (0) }_{L^2} ^2 +  B_1 ) = B_9, \\
\norm{\nabla_x \sigma_E}_{L^\infty \cap L^2 (0, T; L^2 ) } + \norm{\Delta_x \sigma_E } _{L^2 (0, T; L^2 ) } ^2 \le C \exp (B_1 ^2 ) \left ( \norm{\nabla_x \sigma_E (0)}_{L^2} ^2 + B_4 ^2 B_1 \right ) = B_{10}. 
\end{gathered} \tag{ESB} \label{ESB}
\end{equation}

\subsection{Control of higher derivatives of $u$ and viscous stress}

We take curl to the Navier-Stokes equation to obtain
\begin{equation}
\partial_t \omega + u \cdot \nabla_x \omega = \Delta_x \omega + \nabla_x ^\perp \cdot \nabla_x \cdot \sigma_E + \eta \nabla_x ^\perp \cdot \nabla_x \cdot \int_{\mathbb{S}^1 } ( (\nabla_x u ) : m \otimes m ) m \otimes m f dm \tag{Vorticity-Doi} \label{Vorticity-Doi}
\end{equation}
and 
\begin{equation}
\begin{gathered}
\frac{1}{2} \frac{d}{dt} \norm{\omega}_{L^2} ^2 + \norm{\nabla_x \omega}_{L^2} ^2 = - \int_{\mathbb{T}^2} \nabla_x ^\perp \omega \cdot (\nabla_x \cdot \sigma_E ) dx + \eta \int_{\mathbb{T}^2} \omega \nabla_x^\perp \cdot \nabla_x  \cdot  \int_{\mathbb{S}^1} ((\nabla_x u ) : m \otimes m ) m \otimes m f dm dx.
\end{gathered}
\end{equation}
We investigate the last term: note that $\omega = \epsilon_{ij} \partial_i u_j$ where $\epsilon_{ij}$ is the Levi-Civita symbol (in this case just $\epsilon_{12} = 1$ and $\epsilon_{21} = -1$) and the last term can be written as:
\begin{equation}
\begin{gathered}
\eta \int_{\mathbb{T}^2} \epsilon_{ij} \partial_i u_j \left ( \epsilon_{k\ell} \partial_k \partial_p \int_{\mathbb{S}^1} ((\nabla_x u ) : m \otimes m ) m_p m_\ell f dm \right ) dx \\
= \eta \int_{\mathbb{T}^2} \int_{\mathbb{S}^1} \left ( \epsilon_{ij} \epsilon_{k \ell} \partial_k \partial_p \partial_i u_j  m_p m_\ell \right ) ((\nabla_x u) : m \otimes m ) f dm dx \\
= \eta \int_{\mathbb{T}^2} \int_{\mathbb{S}^1} \left (  ( \partial_i \partial_p \partial_i u_\ell - \partial_j \partial_k \partial_\ell u_j ) m_p m_\ell \right )((\nabla_x u) : m \otimes m ) f dm dx \\
= \eta \int_{\mathbb{T}^2} \int_{\mathbb{S}^1} \Delta_x  \left ( ( \nabla_x u ) : m \otimes m  \right ) \left ( ( \nabla_x u ) : m \otimes m  \right ) f dm dx
\end{gathered} \tag{C-Doi2D} \label{C-Doi2D}
\end{equation}
and 
\begin{equation}
\begin{gathered}
\eta \int_{\mathbb{T}^2} \int_{\mathbb{S}^1} \Delta_x  \left ( ( \nabla_x u ) : m \otimes m  \right ) \left ( ( \nabla_x u ) : m \otimes m  \right ) f dm dx \\
= - \eta \int_{\mathbb{T}^2} \int_{\mathbb{S}^1} \left | \nabla_x ( (\nabla_x u): m\otimes m ) \right | ^2 f dmdx - \eta \int_{\mathbb{T}^2} T_3 ( \nabla_x \nabla_x u, \nabla_x u, \nabla_x M_4 ) dx
\end{gathered}
\end{equation}
where $T_3$ is a constant-coefficient trilinear form. Therefore,
\begin{equation}
\begin{gathered}
\frac{d}{dt} \norm{\omega}_{L^2} ^2 + \norm{\nabla_x \omega}_{L^2} ^2 + \eta  \int_{\mathbb{T}^2} \int_{\mathbb{S}^1} \left | \nabla_x ( (\nabla_x u): m\otimes m ) \right | ^2 f dmdx \\
\le C \norm{\nabla_x \sigma_E}_{L^2} ^2 + C \eta \norm{\nabla_x \nabla_x u}_{L^2} \norm{\nabla_x u }_{L^4} \norm{\nabla_x M_4} _{L^4} \\
 \le C \norm{\nabla_x \sigma_E}_{L^2} ^2 + \frac{1}{2} \norm{\nabla_x \omega}_{L^2} ^2 + C \norm{\omega}_{L^2} ^2 \norm{\nabla_x M_4}_{L^2} ^2 \norm{\Delta_x M_4}_{L^2} ^2
\end{gathered}
\end{equation}
where $C$ depends only on parameters (the last $C$ is proportional to $\eta^4$) and 
\begin{equation}
\norm{\omega}_{L^\infty (0, T; L^2) } ^2 + \norm{\nabla_x \omega}_{L^2 (0, T; L^2 ) } ^2 \le C \exp \left ( B_{5,4} B_{6,4} \right ) \left ( \norm{\omega(0)}_{L^2} ^2 + B_9 \right ) = B_{11}. \tag{V-Doi} \label{V-Doi}
\end{equation}
Then, by (\ref{V-Doi}) and (\ref{MnB2}) with $n=4$ we have
\begin{equation}
\norm{\Delta_x M_4}_{L^\infty (0, T; L^2 )}^2 + \norm{\nabla_x \Delta_x M_4}_{L^2 (0, T; L^2) } ^2 \le B_{7,4} \left ( B_{8,4} + B_{11} B_{6,6} \right ) = B_{12}. \tag{M4B2} \label{M4B2}
\end{equation}
Finally, by multiplying $- \Delta_x \omega$ to (\ref{Vorticity-Doi}) and integrating, we have
\begin{equation}
\begin{gathered}
\frac{1}{2} \frac{d}{dt} \norm{\nabla_x \omega}_{L^2} ^2 + \norm{\Delta_x \omega}_{L^2} ^2 = \int_{\mathbb{T}^2} u \cdot \nabla_x \omega \Delta_x \omega dx + \int_{\mathbb{T}^2} \Delta_x \omega (\nabla_x ^\perp \cdot \nabla_x \cdot \sigma_E ) dx \\
- \eta \int_{\mathbb{T}^2} \Delta_x \omega \nabla_x^\perp \cdot \nabla_x  \cdot  \int_{\mathbb{S}^1} ((\nabla_x u ) : m \otimes m ) m \otimes m f dm dx.
\end{gathered}
\end{equation}
Again the last term can be rewritten as, by the same calculation to (\ref{C-Doi2D}),
\begin{equation}
\begin{gathered}
\eta \int_{\mathbb{T}^2} \Delta_x \omega \nabla_x^\perp \cdot \nabla_x  \cdot  \int_{\mathbb{S}^1} ((\nabla_x u ) : m \otimes m ) m \otimes m f dm dx \\
= \eta \int_{\mathbb{T}^2} \int_{\mathbb{S}^1} \Delta_x ^2 \left ( ( \nabla_x u ) : m \otimes m  \right ) \left ( ( \nabla_x u ) : m \otimes m  \right ) f dm dx
\end{gathered} \tag{C2-Doi2D} \label{C2-Doi2D}
\end{equation}
and
\begin{equation}
\begin{gathered}
\eta \int_{\mathbb{T}^2} \int_{\mathbb{S}^1} \Delta_x ^2 \left ( ( \nabla_x u ) : m \otimes m  \right ) \left ( ( \nabla_x u ) : m \otimes m  \right ) f dm dx \\
= \eta \int_{\mathbb{T}^2} \int_{\mathbb{S}^1} \left ( \Delta_x \left ( ( \nabla_x u ) : m \otimes m  \right ) \right )^2 f dm dx \\
+ \eta \int_{\mathbb{T}^2} T_4 \left ( \nabla_x \Delta_x u, \nabla_x \nabla_x u, \nabla_x M_4 \right ) dx + \eta \int_{\mathbb{T}^2} T_5 \left ( \nabla_x \Delta_x u, \nabla_x u, \Delta_x M_4 \right ) dx
\end{gathered}
\end{equation}
again $T_4$ and $T_5$ are constant-coefficient trilinear tensors. Thus,
\begin{equation}
\begin{gathered}
\frac{d}{dt} \norm{\nabla_x \omega}_{L^2} ^2 + \norm{\Delta_x \omega}_{L^2} ^2 + \eta \int_{\mathbb{T}^2} \int_{\mathbb{S}^1} \left ( \Delta_x \left ( ( \nabla_x u ) : m \otimes m  \right ) \right )^2 f dm dx \\
\le C ( \norm{u}_{L^2} ^2 \norm{\nabla_x u }_{L^2} ^2   + \norm{\nabla_x M_4}_{L^2} ^2 \norm{\Delta_x M_4}_{L^2} ^2 + \norm{\nabla_x \Delta_x M_4}_{L^2} ^2  )  \norm{\nabla_x \omega}_{L^2} ^2 \\
+ C (  \norm{\Delta_x \sigma_E }_{L^2 } ^ 2 + \norm{\nabla_x u}_{L^2} ^2 \norm{\Delta_x M_4}_{L^2} ^2  ) 
\end{gathered}
\end{equation}
and
\begin{equation}
\begin{gathered}
\norm{\nabla_x \omega}_{L^\infty (0, T; L^2 ) } ^2 + \norm{\Delta_x \omega}_{L^2 (0, T; L^2) } ^2 \le B_{!3}
\end{gathered} \tag{V2-Doi} \label{V2-Doi}
\end{equation}
where
$$B_{13} = C \exp \left ( C ( B_1 ^2 + B_{6,4} ^2 + B_{12} ) \right ) \left ( \norm{\nabla_x \omega (0)}_{L^2} ^2 + B_{10} + B_{11} B_{6,4} \right ). $$
\begin{remark}
The cancellation structures (\ref{C-Doi2D}) and (\ref{C2-Doi2D}) hold for 3D case also. To illustrate, we have
\begin{equation}
\begin{gathered}
\int dmdx \nabla_x \wedge u \cdot \nabla_x \wedge \left ( \nabla_x \cdot \left ( ( (\nabla_x u): m \otimes m ) m \otimes m f \right ) \right ) \\
= \int dmdx \epsilon_{i j' k' } \partial_{j'} u_{k'} \epsilon_{ijk} \partial_j \partial_\ell   ( ( (\nabla_x u ) : m \otimes m ) m_\ell m_k f ) \\
= \int dmdx \epsilon_{ij'k'} \epsilon_{ijk} (\partial_j \partial_j' \partial_\ell u_{k'} ) m_\ell m_k ((\nabla_x u) : m \otimes m ) f  \\
= \int dmdx (\partial_j ^2 \partial_\ell u_k - \partial_j \partial_k \partial_\ell u_j ) m_\ell m_k ((\nabla_x u) : m \otimes m ) f \\
= \int dmdx (\Delta_x ((\nabla_x u):m \otimes m ) ) ((\nabla_x u) : m \otimes m ) f.
\end{gathered}
\end{equation}
\end{remark}

\section{Local well-posedness of (\ref{Doi})}

In this section, we prove local well-posedness of (\ref{Doi}). Once local well-posedness is established, global well-posedness follows from the a priori estimates established in the previous section.

\subsection{Local existence of the solution}
We follow the method presented in Constantin and Seregin: the existence of the system follows from uniform bounds on the approximate system
\begin{equation}
\begin{gathered}
\partial_t u + u \cdot \nabla_x u = -\nabla_x p + \Delta_x u + \nabla_x \cdot \mathcal{J}_\ell ( \sigma_E ) + \nabla_x \cdot \mathcal{J}_\ell \left ( \eta \int_{\mathbb{S}^1} ( \mathcal{J}_\ell (\nabla_x u ) : m \otimes m ) m \otimes m f dm \right ), \\
\nabla_x \cdot u = 0, \\
\partial_t f + \mathcal{J}_\ell (u) \cdot \nabla_x f = k \Delta_m f + \nu \Delta_x f - \nabla_m \cdot ( P_{m^\perp } ( \mathcal{J}_\ell (\nabla_x u ) m f ) )
\end{gathered} \tag{Doi-Approx} \label{Doi-Approx}
\end{equation}
which satisfies the same bounds (\ref{FE}), (\ref{M0B}), (\ref{MnB1}) for $n=4$, (\ref{ESB}), (\ref{V-Doi}), (\ref{M4B2}), and (\ref{V2-Doi}), and solutions of these systems are obtained by an implicit iteration scheme, using linear equations in each step of the approximation:
\begin{equation}
\begin{gathered}
\partial_t u_{n+1} + u_n \cdot \nabla_x u_{n+1} = - \nabla_x p_{n+1} + \Delta_x u_{n+1} \\ + \nabla_x \cdot \mathcal{J}_{\ell} (\sigma_E (f_n ) ) + \nabla_x \cdot \mathcal{J}_\ell \left ( \eta \int_{\mathbb{S}^1} \left ((\mathcal{J}_\ell (\nabla_x u_{n+1} ) ) : m \otimes m \right ) m \otimes m f_n dm \right ), \\
\nabla_x \cdot u_{n+1} = 0, \\
\partial_t f_{n+1} + \mathcal{J}_\ell (u_n ) \cdot \nabla_x f_{n+1} = k \Delta_m f_{n+1} + \nu \Delta_x f_{n+1} - \nabla_m \cdot \left ( P_{m^\perp} ( \mathcal{J}_\ell \nabla_x u_n ) m f_{n+1 } \right ).
\end{gathered} \tag{Doi-Approx2} \label{Doi-Approx2}
\end{equation}
Existence of (\ref{Doi-Approx}) follows from standard arguments in Fokker-Planck equation: first each system in (\ref{Doi-Approx2}) has smooth solution (same regularity as in the a priori estimate, uniform bounds in $n$), and therefore we have weakly convergent subsequence $u_n$ converging to $u$ in $L^\infty (0, T; W^{2,2}) \cap L^2 (0, T; W^{3,2} ) $, and by Aubin-Lions and Rellich-Kondrachov we have $u_n \rightarrow u \in L^2 (0, T; W^{2-\epsilon,2})$, which is a strong convergence. Also we establish similar strong convergence in moments, and we establish convergence of evolution equation of $u_n$ to that of $u$, which proves that the limit $u$ is a weak solution of (\ref{Doi-Approx}), and since $u$ has enough regularity it is a strong solution. We also find the limit $f$ of $f_n$, using the results from the trigonometric moment problem. We see that $f$ is a weak solution of (\ref{Doi-Approx}), and that $f$ is given by the density, and the standard theory gives the free energy estimate (\ref{FE}).
\paragraph{Uniform bounds on solutions of (\ref{Doi-Approx2}).}
Suppose that $\norm{u_q}_{L^\infty (0, T; W^{j,2} ) \cap L^2 (0, T; W^{j+1,2} ) } ^2 \le B_{app} ^j$, and $\norm{M_2^q}_{L^\infty (0, T; W^{j, 2} ) \cap L^2 (0, T; W^{j+1, 2} ) } ^2 + \norm{M_4 ^q}_{L^\infty (0, T; W^{j, 2} ) \cap L^2 (0, T; W^{j+1, 2} ) } ^2 \le F_{app} ^j$ for $j=0,1,2$, and $\norm{M_6 ^q}_{L^\infty (0, T: W^{j,2} ) \cap L^2 (0, T; W^{j+1, 2} ) } ^2 \le F_{app} ^j$ for $j=0, 1$, and for all $q \le n$. We will determined the exact values of $B_{app} ^j$ and $F_{app} ^j$ in the subsequent estimates. Then we have
\begin{equation}
\begin{gathered}
\frac{d}{dt} \norm{u_{n+1} } _{L^2} ^2 +  \norm {\nabla_x u_{n+1} } _{L^2} ^2 + \eta \int \left ( \mathcal{J}_{\ell} (\nabla_x u_{n+1}  ) : m \otimes m \right ) ^2 f_n dm dx \le C \norm{\sigma_E (f_n ) }_{L^2} ^2 \le C B_4 ^2, 
\end{gathered}
\end{equation}
from the energy estimate, and from this we obtain
\begin{equation}
\norm{u_{n+1}}_{L^\infty (0, T; L^2)} ^2 + \norm{\nabla_x u_{n+1} }_{L^2 (0, T; L^2) } ^2 \le  \norm{u(0)}_{L^2} ^2 + C B_4 ^2 T = B_{app} ^0.
\end{equation}
The vorticity equation becomes
\begin{equation}
\begin{gathered}
\partial_t \omega_{n+1} + u_n \cdot \nabla_x \omega_{n+1} = - \epsilon_{ik} \partial_i u_j ^n \partial_j u_k ^{n+1} + \Delta_x \omega_{n+1} + \nabla_x ^{\perp} \cdot \nabla_x \cdot \mathcal{J}_{\ell} \left ( \sigma_E ( f_n ) \right ) \\
 + \nabla_x ^{\perp} \cdot \nabla_x \cdot \mathcal{J}_\ell \left ( \eta \left ( \left ( \mathcal{J}_\ell ( \nabla_x u_{n+1} ) \right ) : m \otimes m \right ) m \otimes m f_{n} dm  \right )
\end{gathered}
\end{equation}
which leads to the estimate
\begin{equation}
\begin{gathered}
\frac{d}{dt} \norm{\omega_{n+1} }_{L^2} ^2 + \norm{\nabla_x \omega_{n+1} }_{L^2} ^2 + \eta \int \left ( \mathcal{J}_\ell (\nabla_x \nabla_x u_{n+1} ) : m \otimes m \right )^2 f_n dm dx \\ 
 \le C \left ( \norm{\nabla_x \sigma_E (f_n) }_{L^2} ^2 + \norm{\omega_{n+1}} _{L^2} ^2 \left ( \norm{\nabla_x M_4 ^n }_{L^2} ^2 \norm{\Delta_x M_4 ^n }_{L^2} ^2 + 1 \right ) + \norm {\omega_n}_{L^2} ^2 \right ), \\
 \frac{d}{dt} \norm{\nabla_x \omega_{n+1} }_{L^2} ^2 + \norm{\Delta_x \omega}_{L^2} ^2 + \eta \left ( \Delta_x ( (\nabla_x u ) : m \otimes m ) ^2 f_n dm dx\right ) \\
 \le C \left ( \norm{u_n}_{L^2} ^2 \norm{\nabla_x u_n}_{L^2} ^2 + \norm{\nabla_x M_4 ^n}_{L^2} ^2 \norm{\Delta_x M_4 ^n }_{L^2} ^2 + \norm{\nabla_x \Delta_x M_4 ^n}_{L^2} ^2 + \norm{\omega_n}_{L^2} ^2 \right ) \norm{\nabla_x \omega_{n+1} }_{L^2} ^2 \\  
 + C \left ( \norm{\Delta_x \sigma_E (f_n ) }_{L^2} ^2 + \norm{\nabla_x u_{n+1} }_{L^2} ^2 \norm{\Delta_x M_4 ^n }_{L^2} ^2 + \norm{\nabla_x \omega_n}_{L^2} ^2 \norm{\omega_{n+1} }_{L^2} ^2 \right )
\end{gathered}
\end{equation}
Also we have
\begin{equation}
\begin{gathered}
\norm{M_4 ^n }_{L^\infty (0, T; L^2) } ^2 + \norm{ \nabla_x M_4 ^n }_{L^2(0, T; L^2) } ^2 \le e^{CT} (\norm{M_4 ^n (0)}_{L^2} ^2 + B_4 B_{app} ^1 ) \le e^{CT} (\norm{M_4 (0)}_{L^2} ^2 + B_4 B_{app} ^0 ) = F_{app} ^0, \\
\norm{\nabla_x M_4 ^n}_{L^\infty (0, T; L^2 )} ^2 + \norm{\Delta_x M_4 ^n}_{L^2 (0, T; L^2 ) } ^2 \le e^{(B_{app} ^0)^2 + CT } \left ( \norm{ \nabla_x M_4 (0)}_{L^2} ^2 + B_{app} ^0 \right ) = F_{app} ^1, \\
\end{gathered}
\end{equation}
and the same bound for $M_2 ^n $ (or $\sigma_E (f_n ) $) and $M_6 ^n$ in place of $M_4 ^n$. From this we conclude that
\begin{equation}
\norm{\omega_{n+1} }_{L^\infty (0, T; L^2) } ^2 + \norm{\nabla_x \omega_{n+1} }_{L^2} ^2 \le e^{CT + C ( F_{app} ^1 )^2 } \left ( \norm{\omega (0)}_{L^2} ^2 + C F_{app} ^0 + B_{app} ^0 \right ) = B_{app} ^1.
\end{equation}
Then 
\begin{equation}
\norm{\Delta_x M_4 ^n}_{L^\infty (0, T; L^2 )} ^2 + \norm{\nabla_x \Delta_x M_4 ^n}_{L^2 (0, T; L^2) } ^2 \le e^{C ( B_{app} ^0 + 1 )B_{app} ^0 } \left ( \norm{\Delta_x M_4 (0)}_{L^2} ^2 + C ( F_{app} ^1 + B_4 ^2 ) B_{app} ^1\right ) = F_{app} ^2,
\end{equation}
and finally
\begin{equation}
\begin{gathered}
\norm{\nabla_x \omega_{n+1} }_{L^\infty (0, T; L^2) } ^2 + \norm{\Delta_x\omega_{n+1} }_{L^2 (0, T; L^2) } ^2 \\ 
\le e^{C ( ( B_{app} ^0 ) ^2 + (F_{app} ^1 ) ^2 + F_{app} ^2 + B_{app} ^0 ) } ( \norm{\nabla_x \omega (0)}_{L^2} ^2 + F_{app} ^1 + F_{app} ^1 B_{app } ^1 + (B_{app} ^1 ) ^2 ) = B_{app } ^2.
\end{gathered}
\end{equation}
This verifies that $\norm{u_n}_{L^\infty (0, T: W^{2,2} ) \cap L^2 (0, T; W^{3,2} ) }$ is uniformly bounded. Furthermore, $\partial_t u_n$ is also uniformly bounded in $L^2 (0, T; L^2 ) $. 
\paragraph{Convergence of $u_n$ to $u$ and existence of solution for $u$ equation of (\ref{Doi-Approx}). }
By Banach-Alaoglu, we have a subsequence of $u_n$, weakly converging to $u$ in $L^\infty (0, T; W^{2,2} ) \cap L^2 (0, T; W^{3,2} ) $, and by Aubin-Lions, in fact
$$ u_n \rightarrow u \in C([0, T]; W^{2-\epsilon,2} ) \,\, \mathrm{strongly} $$
for small enough $\epsilon > 0$, for a further subsequence. We extract further subsequence that $u_n \rightarrow u$, $\nabla_x u_n \rightarrow \nabla_x u$ almost everywhere. Moreover, we can find a further subsequence such that there is $\sigma_E$ and $M_4$ such that
\begin{equation}
\sigma_E (f_n) \rightarrow \sigma_E, M_4 (f_n) = M_4 ^n \rightarrow M_4 \in C ([0, T]; W^{2-\epsilon, 2 } ) \,\, \mathrm{strongly}
\end{equation}
also. To show that $u$ is a solution of (\ref{Doi-Approx}), we first recall that $W^{2-\epsilon, 2} (\mathbb{T}^2 ) $ is a Banach algebra for $\epsilon < 1$, and also a refined version of Agmon inequality:
\begin{equation}
\norm{u}_{L^\infty (\mathbb{T}^2 ) } \le C \norm{u}_{W^{2-\epsilon, 2} (\mathbb{T}^2 ) }  \tag{Agmon} \label{Agmon}
\end{equation}
Now the evolution equation for $u_{n+1}$ of (\ref{Doi-Approx2}) can be rewritten as the following:
\begin{equation}
\partial_t u_{n+1} = \mathbb{P} \left ( - u_n \cdot \nabla_x u_{n+1} + \Delta_x u_{n+1} + \nabla_x \cdot \mathcal{J}_\ell (\sigma_E (f_n ) ) + \nabla_x \cdot \mathcal{J}_\ell T (\mathcal{J}_\ell (\nabla_x u_{n+1 } ) , M_4 (f_n ) ) \right )
\end{equation}
where $\mathbb{P}$ is the orthogonal projection to divergence-free vector field and $T$ is a constant-coefficient bilinear tensor between two arguments. We first control $u_n \cdot \nabla_x u_{n+1}$. Since $u_n \rightarrow u \in C([0, T]; L^\infty )$ by Agmon and $\nabla_x u_{n+1} \rightarrow \nabla_x u \in C([0, T]; L^2)$, $\mathbb{P} (u_n \cdot \nabla_x u_{n+1} ) \rightarrow \mathbb{P} (u \cdot \nabla_x u ) \in C([0, T]; L^2)$. Also, $\mathbb{P} \Delta_x u_{n+1} \rightarrow \mathbb{P} \Delta_x u \in C([0, T]; W^{-1, 2} ) $, $\mathbb{P} \nabla_x \cdot \mathcal{J}_\ell (\sigma_E (f_n ) ) \rightarrow \mathbb{P} \nabla_x \cdot \mathcal{J}_\ell (\sigma_E) \in C([0, T]; W^{1-\epsilon, 2 } )$ (uniformly in $\ell$), and finally since $M_4 (f_n) \rightarrow M_4 \in C([0, T]; L^\infty )$ by Agmon and $\nabla_x u_{n+1} \rightarrow \nabla_x u \in C([0, T]; L^2 )$, $T ( \mathcal{J}_\ell (\nabla_x u_{n+1}) , M_4 (f_n )) \rightarrow T ( \mathcal{J}_\ell (\nabla_x u) , M_4 ) \in C([0, T]; L^2 ) $ and so  $\nabla_x \cdot \mathcal{J}_\ell T ( \mathcal{J}_\ell (\nabla_x u_{n+1}) , M_4 (f_n )) \rightarrow \nabla_x \cdot \mathcal{J}_\ell T ( \mathcal{J}_\ell (\nabla_x u) , M_4 ) \in C([0, T]; W^{-1, 2} ) $ (uniformly in $\ell$). Finally, since $\partial_t u_n $ is weakly convergent to $\partial_t u$ in $L^2 (0, T; L^2)$, we see that $u$ is a weak solution of $u$-part of (\ref{Doi-Approx}).
\paragraph{Convergence of $f_n$ to $f$.}
To deal with this issue, we recall the result from the trigonometric moment problem:
\begin{theorem}[Carath\'{e}odory-Toeplitz]
For a complex sequence $s = (s_j) \in {\mathbb{N} \cup \{0\} }$ the following are equivalent:
\begin{enumerate}
\item There exists a (nonnegative) radon measure $\mu$ on $\mathbb{T}^1$ such that
$$s_j = \int_{\mathbb{S}^1} e^{-i j \theta} d\mu(\theta) $$
for all $j \in \mathbb{Z}$. Here $s_{-j} := \bar{s_j}$ for $n \ge 1$.
\item $\sum_{j,k = 0} ^\infty s_{j-k} c_k \bar{c_j} \ge 0$ for all finite complex sequences $(c_j)_{j \in  {\mathbb{N} \cup \{0\} }}$.
\end{enumerate}
The measure $\mu$ is uniquely determined by determined by $s_j$.
\end{theorem}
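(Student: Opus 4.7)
The plan is to establish the two directions of the equivalence separately, then derive uniqueness from density. The direction (1) $\Rightarrow$ (2) is a direct computation: given the integral representation $s_j = \int e^{-ij\theta} d\mu$, for any finitely supported sequence $(c_j)$,
$$\sum_{j,k=0}^{\infty} s_{j-k} c_k \bar{c_j} = \int_{\mathbb{S}^1} \sum_{j,k} c_k \bar{c_j} e^{-i(j-k)\theta} d\mu(\theta) = \int_{\mathbb{S}^1} \Bigl|\sum_{k} c_k e^{ik\theta}\Bigr|^2 d\mu(\theta) \ge 0,$$
since $\mu$ is nonnegative.

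For the nontrivial direction (2) $\Rightarrow$ (1), I would use the Fejér kernel approach (essentially Herglotz's theorem). Define the Cesàro sums
$$\sigma_N(\theta) := \sum_{|\ell| < N} \Bigl(1 - \tfrac{|\ell|}{N}\Bigr) s_\ell e^{i\ell\theta},$$
and apply hypothesis (2) to the explicit finite sequence $c_k = e^{-ik\theta}$ for $0 \le k \le N-1$ (and $c_k = 0$ otherwise). Using the combinatorial identity $\#\{(j,k) : 0 \le j, k \le N-1,\ j - k = \ell\} = N - |\ell|$, the quadratic form in (2) collapses to $N \sigma_N(\theta)$, so we obtain $\sigma_N(\theta) \ge 0$ pointwise. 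Therefore $d\mu_N := \sigma_N(\theta)\, d\theta/(2\pi)$ is a nonnegative Radon measure on $\mathbb{S}^1$ with total mass $s_0$, and direct computation gives $\int e^{-ij\theta} d\mu_N = (1 - |j|/N) s_j$ when $|j| < N$ and $0$ otherwise. Since the family $\{\mu_N\}$ is uniformly bounded in total variation, Banach-Alaoglu on the dual of $C(\mathbb{S}^1)$ yields a weak-$\ast$ convergent subsequence $\mu_N \rightharpoonup \mu$; nonnegativity passes to the limit, and testing against $e^{-ij\theta} \in C(\mathbb{S}^1)$ gives the desired moment identity $\int e^{-ij\theta} d\mu = s_j$.

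Uniqueness is then almost automatic: if $\mu_1$ and $\mu_2$ both satisfy the moment condition, they agree on every trigonometric polynomial, and since trigonometric polynomials are dense in $C(\mathbb{S}^1)$ by Stone-Weierstrass, $\mu_1 = \mu_2$ as elements of the dual of $C(\mathbb{S}^1)$, hence as Radon measures.

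The main technical point is recognizing the connection between the positive semi-definiteness of the Toeplitz form in (2) and pointwise nonnegativity of the Fejér means, which hinges on the combinatorial identity above; after that, the rest is a standard weak-$\ast$ compactness argument. An alternative route through the Fejér-Riesz factorization of nonnegative trigonometric polynomials, followed by a Riesz representation argument applied to the functional $p = \sum p_j e^{ij\theta} \mapsto \sum p_j s_{-j}$, also works, but it relies on the less elementary Fejér-Riesz theorem, so the Cesàro/Herglotz approach seems preferable here.
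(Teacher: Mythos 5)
Your proof is correct, but note that the paper does not actually prove this statement: it is quoted as a classical result from the theory of the trigonometric moment problem (Carath\'{e}odory--Toeplitz/Herglotz) and used as a black box to reconstruct the measure $f\,dm$ from the limits of the trigonometric moments $s_j(f_n)$. Your argument is the standard Herglotz proof and all the steps check out: the identity $\sum_{j,k}s_{j-k}c_k\bar{c_j}=\int|\sum_k c_k e^{ik\theta}|^2\,d\mu$ for the easy direction; the choice $c_k=e^{-ik\theta}$, $0\le k\le N-1$, together with the count $\#\{(j,k):j-k=\ell\}=N-|\ell|$ to show the Fej\'{e}r means $\sigma_N$ are pointwise nonnegative (here the Hermitian symmetry $s_{-j}=\bar{s_j}$ from the hypothesis is what makes $\sigma_N$ real-valued, which is worth saying explicitly); the uniform mass bound $\mu_N(\mathbb{S}^1)=s_0$ and weak-$\ast$ compactness; and uniqueness via Stone--Weierstrass. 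Since the paper supplies no proof to compare against, there is nothing further to reconcile; if anything, your write-up also silently corrects the typographical slips in the statement (the sequence is indexed by $\mathbb{N}\cup\{0\}$, and the convention $s_{-j}:=\bar{s_j}$ is for $j\ge 1$, not $n\ge 1$).
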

In this regard, we define the trigonometric moments
$$ s_j (f_n) = \int_{\mathbb{S}^1} e^{-ij \theta} f_n (\theta) d \theta. $$
By the very similar argument as before, we obtain
$$ \norm{s_j (f_n)}_{L^\infty (0, T; L^2) } ^2 + \norm{\nabla_x s_j (f_n) }_{L^2} ^2 \le B_4 ^2 (1 + j^2  B_{app} ^0 ), $$
which is uniform in $n$, and $ \norm{\partial_t s_j (f_n)}_{W^{-1, 2} } \le  (\sqrt{B_{app} ^2 } + C \nu ) \norm{\nabla_x s_j (f_n) }_{L^2} + C j B_4 \norm{\nabla_x u}_{L^2} $ so $\partial_t s_j (f_n )$ is uniformly bounded in $L^2 (0, T; W^{-1, 2})$. Therefore, again by Aubin-Lions and diagonalization argument, we can find a further subsequence of $f_n$ such that $s_j (f_n) $ converges to some $s_j$ strongly in $L^2 (0, T; L^2)$, and therefore almost everywhere, for all $j$. Also, we see that
$$ \sum_{j,k=0} ^\infty s_{j-k} (f_n) (x, t) c_k \bar{c_j} \ge 0 $$
for all $x, t$ for all finite complex sequences $(c_j)_{j \in \mathbb{N}\cup \{0 \} }$, and by almost everywhere convergence $ \sum_{j,k=0} ^\infty s_{j-k}  (x, t) c_k \bar{c_j} \ge 0 $ for almost every $(x, t)$. Therefore, we see that there exists a (nonnegative) radon measure $\mu$ such that $s_j = \int_{\mathbb{S}^1} e^{-ij \theta} d\mu (\theta ) $ for all $j \in \mathbb{Z}$. Next we show that for almost every $(x,t)$ $f_n (x, t, m) dm$ converges to $\mu(x, t; dm)$ weakly. The argument is analogous to the method of moment: since $\int_{\mathbb{S}^1} f_n (x, t, m ) dm \le B_4$, $( f_n (x, t) dm)$ is uniformly bounded and obviously uniformly tight. Therefore, by Prokhorov's theorem $f_n (x, t, m) dm$ converges weakly to some Radon measure $\nu (x, t; dm)$. Thus, $s_j (f_n) (x, t, m) dm$ converges to $\int e^{-ij\theta} d\nu (x, t ; dm)$ for each $j$, but this equals to $s_j = \int_{\mathbb{S}^1} e^{-ij\theta} d\mu(\theta)$. Since $\mu$ is determined by the trigonometric moments, $\nu = \mu$. Note that $M_k [\mu] = M_k$ also holds.
\paragraph{$f$ is a weak solution of (\ref{Doi-Approx}).}
To show that $\mu$ is the weak solution of (\ref{Doi-Approx}), we write the Fokker-Planck equation of (\ref{Doi-Approx2}) in the weak form as in (\ref{Cauchy-FP}), and check the convergence. First, we note that
$$|E | =  \left | \{ t \in (0, T ) : \, \left | \{ x \in \mathbb{T}^2 : f_n (x, t, dm) \,\, \mathrm{does} \,\, \mathrm{not} \,\, \mathrm{converge} \,\, \mathrm{weakly} \,\, \mathrm{to} \,\, \mu(x, t; dm) \} \right | > 0 \} \right | = 0. $$
For $t \in (0, T) - E$, for almost all $x$, $\int_{\mathbb{S}^1} \phi_m (m) f_{n+1} (x, t, m) dm \rightarrow \int_{\mathbb{S}^1} \phi_m (m) \mu (x, t; dm)$ by weak convergence, and $\left | \int_{\mathbb{S}^1} \phi_m (m) f_{n+1} (x, t, m) dm \right | \le C_{\phi_m} B_4$ for almost every $x$, so by Dominated convergence theorem, 
$$ \int_{ \mathbb{T}^2 \times \mathbb{S}^1} \phi_x (x) \phi_m (m) f_{n+1} (x, t; m) dm dx \rightarrow  \int_{ \mathbb{T}^2 \times \mathbb{S}^1} \phi_x (x) \phi_m (m) \mu (x, t; dm) dx. $$
The second term is easy since the initial data of $f_n$ are just mollified ones of $f(0)$. We can show convergence for other terms except for the ones involving velocity field $u$, using the very same argument: by weak convergence we have almost everywhere convergence for $m$ integral part first, and for that term we have uniform bound (depending on $\phi_m$), then we apply dominated convergence theorem. For the terms involving velocity fields $u$, we apply the generalized dominated convergence theorem instead. Then standard parabolic regularity theory guarantees that actually $\mu$ is given by density $f(x, t, m) dm dx$, and if initial entropy is finite, then it remains finite, with $f(t) \in W^{1,1} (\mathbb{T}^2 \times \mathbb{S}^1)$, and $\int_0 ^T \int_{\mathbb{T}^2 \times \mathbb{S}^1} \frac{|\nabla_{x,m} f|^2}{f}dm dt < \infty$. 
\paragraph{Solution of (\ref{Doi}).} Existence of a solution of (\ref{Doi}) is just a repetition of arguments for establishing solutions of (\ref{Doi-Approx}). In this case, we set up $\ell \rightarrow \infty$.

\subsection{Uniqueness of the solution}

Uniqueness of the solution follows from relative energy method. Suppose that $(u, f)$ and $(v, g)$ are two solutions of (\ref{Doi}) with same initial data $(u_0, f_0)$ satisfying our assumptions. 
\paragraph{Control of $u-v$.}
By taking $L^2$ estimates, vorticity estimate, and $W^{1,2}$ norm estimates for $u-v$, we have
\begin{equation}
\begin{gathered}
\frac{d}{dt} \norm{u-v}_{W^{2,2}} ^2 + \norm{ (u-v)}_{W^{3,2}} ^2 \le C_1 (t) \norm{u-v}_{W^{2,2}} ^2  \\
+ C \norm{\sigma_E (f) - \sigma_E (g) }_{W^{2,2}} ^2 + C_2 (t) \norm{M_4 (f) - M_4 (g) }_{W^{2,2}} ^2
\end{gathered} \tag{Rel-Energy-Doi} \label{Rel-Energy-Doi}
\end{equation}
where $C_1 (t), C_2 (t) \in L^1 (0, T)$ coming from norms of $v$ and $C$ is a constant independent of time.
\paragraph{Control of $\int_{\mathbb{S}^1} |f-g| dm $.}
The key quantity of control is $\int_{\mathbb{S}^1} |f-g| dm $. Let $\mathrm{sgn}_\beta$ be a smooth, increasing regularization of the sign function such that $\mathrm{sgn}_\beta (s) = \mathrm{sign}(s) $ for $|s| \ge \beta$, and let $|s|_\beta = \int_0 ^s \mathrm{sgn}_\beta (r) dr$. Then as $\beta \rightarrow 0$, we have $|s |_\beta \rightarrow |s|$. Then by subtracting two Fokker-Planck equations of (\ref{Doi}) for $f$ and $g$, then by replacing $\phi_x (x) \phi_m (m)$ in (\ref{Cauchy-FP}) by $\mathrm{sgn}_\beta (f-g) \int |f-g|_\beta dm$ (we can do this since $C^\infty (\mathbb{T}^2 ) \otimes C^\infty (\mathbb{S}^1 )$ is dense in $L^p ( \mathbb{T}^2 \times \mathbb{S}^1, f(x, t, m) dx dm (g(x, t, m) dx dm) )$  for any $p \ge 1$), then by checking that terms from diffusion are positive, and finally taking the limit $\beta \rightarrow 0$ (and dividing by $\norm{\int_{\mathbb{S}^1}  |f-g| dm  }_{L^2} $ ), we obtain
\begin{equation}
\begin{gathered}
 \norm{\int_{\mathbb{S}^1}  |f-g| dm  }_{L^2} (t)  \le \int_0 ^t   \left ( \norm{u-v}_{W^{1,\infty}} \norm{\int_{\mathbb{S}^1} |\nabla_{x, m} g| dm }_{L^2} + C B_4 \norm{\nabla_x (u-v)}_{L^2} \right ) ds  .
\end{gathered} \tag{FPdiff} \label{FPdiff}
\end{equation}
Noting that 
\begin{equation*}
\begin{gathered}
\norm{\int_{\mathbb{S}^1} |\nabla g| dm }_{L^2} = \left  ( \int_{\mathbb{T}^2} \left ( \int_{\mathbb{S}^1} |\nabla g | dm \right )^2 dx \right )^{\frac{1}{2} } \le \left ( \int_{\mathbb{T}^2} B_4 \int_{\mathbb{S}^1} \frac{|\nabla g|^2} {g} dm dx  \right )^{\frac{1}{2} } 
\end{gathered}
\end{equation*}
we obtain
\begin{equation}
\begin{gathered}
 \norm{\int_{\mathbb{S}^1}  |f-g| dm  }_{L^2} (t)  \le \int_0 ^t  \left ( B_4 ^{\frac{1}{2} } \left ( \int_{\mathbb{T}^2 \times \mathbb{S}^1} \frac{| \nabla_{x, m} g |^2}{g} (s)  dm dx  \right )^{\frac{1}{2}} + C B_4 \right ) \norm{( u-v) (s) }_{W^{3,2} } ds \\
\le  C \norm{u-v}_{L^2 (0, t; W^{3,2} ) } \left ( \sqrt{t} + \left ( \int_0 ^t \int_{\mathbb{T}^2 \times \mathbb{S}^1 }  \frac{| \nabla_{x, m} g |^2}{g} (s)  dm dx ds \right )^{\frac{1}{2} } \right ) \le C (\sqrt{B_1} + \sqrt{t} ) \norm{u-v}_{L^2 (0, t; W^{3,2} ) }
\end{gathered} 
\end{equation}
thanks to the free energy estimate (\ref{FE}). Therefore,
\begin{equation}
\norm{\int_{\mathbb{S}^1} |f-g| dm (t) }_{L^2} ^2 \le C (1 + t) \norm{u-v}_L^2 (0, t; W^{3,2} ) ^2.
\tag{FPdiff2} \label{FPdiff2}
\end{equation}
\paragraph{Control of moments.}
Finally, we apply the relative energy estimates for evolution equation of moments (\ref{ME}), and apply (\ref{FPdiff2}) in closing the effect of higher moments to obtain the following:
\begin{equation}
\begin{gathered}
\frac{d}{dt} \norm{M_n (f) - M_n (g) }_{W^{1,2} } ^2 + \nu \norm{M_n (f) - M_n (g) }_{W^{2,2} } ^2 \\ 
\le C^n \norm{M_n (f) - M_n (g) }_{W^{1,2} } ^2 + C_n \norm{u-v}_{W^{1,2} } ^2 + C_3 (t) \norm{\int_{\mathbb{S}^1} |f-g| dm }_{L^2} ^2, \\
\frac{d}{dt} \norm{M_n (f) - M_n (g) }_{W^{2,2} } ^2 + \nu \norm{M_n (f) - M_n (g) }_{W^{3,2} } ^2 \\
\le C^n \norm{M_n (f) - M_n (g) }_{W^{2,2} } ^2 + C^n \norm{u-v}_{W^{2,2}} ^2 + \frac{\nu}{2} \norm{\Delta_x ( M_{n+2} (f) - M_{n+2} (g) )}_{L^2} ^2  \\
+ \frac{C^n}{\nu} \left ( \norm{\int_{\mathbb{S}^1} |f-g| dm }_{L^2} ^2 + \norm{\nabla_x ( M_{n+2} (f) - M_{n+2} (g) }_{L^2} ^2 \right ) ,
\end{gathered} \tag{Mdiff} \label{Mdiff}
\end{equation}
where $C_3 \in L^1 (0, T ) $ depending on the norms of $v$ and $g$, and $C^n$ are constants depending only on $n$ and norms of $v$ and $g$. Summing up (\ref{Rel-Energy-Doi}), (\ref{FPdiff2}), and (\ref{Mdiff}), we finally obtain
\begin{equation}
\begin{gathered}
\frac{d}{dt} F(t) + G(t) \le \phi(t) \left ( F(t) + \int_0 ^t G(s) ds \right ), \phi \in L^1 (0, T), G(t) = \norm{(u - v)(t) }_{W^{3,2} } ^2 , \\
F(t) = \norm{ u - v}_{W^{2,2} } ^2 + \norm{\sigma_E (f) - \sigma_E (g) }_{W^{2,2} } ^2 + \norm{M_4 (f) - M_4 (g) }_{W^{2,2} } ^2 + \norm{M_6 (f) - M_6 (g) }_{W^{1,2} } ^2 (t) 
\end{gathered}
\end{equation}
as desired. Noting that $F(0)= 0$ and applying Gr\"{o}nwall's inequality, we see that $F(t) = \int_0 ^t G(s) ds = 0$. This proves the uniqueness of the solution, and completes the proof of Theorem \ref{Main}.

\section{Conclusion}

We proved Theorem \ref{Main}, which states that the well-posedness of strong solutions of (\ref{Doi}) with free energy estimates. Also we showed well-posedness of strong solutions of (\ref{DA}), which is an approximate closure of (\ref{Doi}). In addition, we report that the condition $\eta < \eta_c$ for some critical $\eta_c$, which was used to control viscous stress term, was not necessary and we can show global well-posedness for all $\eta >0$. 
\paragraph{Acknowledgement} The author deeply appreciates the helpful support of Professor Peter Constantin. Professor Constantin encouraged the development of the paper and gave a lot of helpful comments. Doctor Dario Vincenzi is the one who introduced the author to work on (\ref{DA}), and the author also thanks to Doctor Vincenzi. Research of the author was partially supported by Samsung scholarship. 

\bibliographystyle{abbrv}
\bibliography{jl4}
\end{document}